\documentclass[12pt]{elsarticle}
\usepackage[left=1in,right=1in, top=1.2in,bottom=1.2in]{geometry}
\usepackage{times}
\usepackage{amssymb,amsthm,latexsym,amsmath,epsfig,pgf}
\usepackage{placeins}
\usepackage{booktabs}
\usepackage{subfig}  
\usepackage{graphicx}  
\usepackage{placeins}
\usepackage{float}
\usepackage{makecell}
\usepackage{xcolor}

\title{
Multiset Partition Dimension of Graphs
}

\newtheorem{theorem}{Theorem}[section]
\newtheorem*{theorem A}{Theorem A}
\newtheorem*{theorem B}{N\"olker's Theorem}
\newtheorem{lemma}{Lemma}[section]

\newtheorem{observation}{Observation}[section]
\newtheorem{problem}{Problem}

\begin{document}

\begin{frontmatter}



\author[label1,label3]{Azzah Albejani}
\ead{azzahahmado.albejani@uon.edu.au}
\author[label1]{Yuqing Lin}
\ead{yuqing.lin@newcastle.edu.au}
\author[label1]{Joe Ryan}
\ead{joe.ryan@newcastle.edu.au}
\author[label2]{Kiki A. Sugeng}
\ead{kiki@sci.ui.ac.id}

\address[label1]{School of Computer and Information Sciences,
The University of Newcastle, Australia}
\address[label2]{Departments of Mathematics,
University of Indonesia,
Depok - Indonesia}
\address[label3]{Department of Mathematics, Faculty of Science, University of Hafr Al Batin, Saudi Arabia}


\begin{abstract}
In this paper, we introduce the multiset partition dimension of graphs. This parameter extends the classical partition dimension to the multiset setting by considering multiset distances from vertices to parts of a vertex partition. We establish some fundamental properties of this parameter and determine its exact values for several important classes of graphs.

\end{abstract}

\begin{keyword}
Metric Dimension \sep Partition Dimension \sep Multiset Dimension \sep Multiset Partition Dimension 

Mathematics Subject Classification : 05C12

\end{keyword}

\end{frontmatter}
\section{Introduction}

The concept of metric dimension of graphs was formally introduced independently by Slater~\cite{slater1975leaves} and Harary \& Melter~\cite{harary1976metric} in the 1970s. 

Let $G = (V,E)$ be a connected graph. For an ordered subset $W = \{w_1, w_2, \dots, w_k\} \subseteq V$, the \emph{metric representation} of a vertex $v \in V$ with respect to $W$ is defined as
\[
r(v|W) = (d(v,w_1), d(v,w_2), \dots, d(v,w_k)),
\]
where $d(u,v)$ denotes the shortest path distance between vertices $u$ and $v$ in $G$. The set $W$ is called a \emph{resolving set} if every pair of distinct vertices $u,v \in V$ has distinct representations, i.e., $r(u|W) \neq r(v|W)$. A resolving set of minimum cardinality is called a \emph{metric basis}, and its size is called the \emph{metric dimension} of $G$, denoted by $\dim(G)$.

Simanjuntak et al. in 2018~\cite{Rinovia2017multiset} have introduced the the multiset variant of the concept. Let $W \subseteq V$. The \emph{multiset representation} of a vertex $v \in V$ with respect to $W$ is defined as
\[
r_m(v|W) = \{ d(v,w) : w \in W \},
\]
where the distances are considered as a multiset (i.e., order is ignored but multiplicities are preserved). The set $W$ is called a \emph{multiset resolving set} if for every pair of distinct vertices $u,v \in V$, we have $r_m(u|W) \neq r_m(v|W)$. A multiset resolving set of minimum cardinality is called a \emph{multiset basis}, and its size is referred to as the \emph{multiset dimension} of $G$.

\textit{The partition dimension} of a graph was first introduced in \cite{chartrand2000} in 2000. It is a variant of the classical metric dimension in which vertices are distinguished based on their distances to subsets of vertices rather than to individual resolving vertex. Formally, let $G$ be a connected graph and let $\Pi = \{P_1, P_2, \dots, P_t\}$ 
be a partition of its vertex set $V(G)$. The partition $\Pi$ is called a 
\textit{resolving partition} if, for every pair of distinct vertices 
$u, v \in V(G)$, the representation of the vertices
\[
r(u|\Pi) = \big(d(u,P_1), d(u,P_2), \dots, d(u,P_t)\big)
\]
and
\[
r(v|\Pi) = \big(d(v,P_1), d(v,P_2), \dots, d(v,P_t)\big)
\]
are distinct, where 
\[
d(u,P_i) = \min\{d(u,x) : x \in P_i\}.
\]
The minimum cardinality $t$ of such a resolving partition is called the 
\textit{partition dimension} of $G$, and is denoted by $pd(G)$. 
 
In this paper, we extend the concept of partition dimension to the multiset setting. 
That is, we consider vertex partitions of a graph such that each vertex is uniquely 
identified by the \emph{multiset} of its distances to the parts of the partition. 
The formal definition is given below.

Let $G$ be a graph, let $v \in V(G)$, and let 
$\Pi = \{S_1, S_2, \dots, S_t\}$ be a partition of the vertex set $V(G)$. 
For each $i \in \{1,2,\dots,t\}$, define
\[
d(v,S_i) = \min\{d(v,x) : x \in S_i\}.
\]
The \textit{multiset partition representation} of $v$ with respect to $\Pi$, 
denoted by $r_{mp}(v \mid \Pi)$, is defined as the multiset
\[
r_{mp}(v \mid \Pi) = \{ d(v,S_1), d(v,S_2), \dots, d(v,S_t) \}.
\]

The partition $\Pi$ is called a \textit{resolving multiset partition} if for every 
pair of distinct vertices $u, v \in V(G)$,
\[
r_{mp}(u \mid \Pi) \neq r_{mp}(v \mid \Pi).
\]

The minimum cardinality $t$ of a resolving multiset partition of $G$ is called the \textit{multiset partition dimension} of $G$, and is denoted by $\mathrm{mpd}(G)$. 
If no resolving multiset partition exists for $G$, then $G$ is said to have infinite multiset partition dimension.

Next, we present some preliminary results.

\section{Basic Properties of the Multiset Partition Dimension}

It is natural to investigate fundamental lower and upper bounds for this newly introduced parameter, as well as to examine its relationship with the classical notion of partition dimension. First, we have the following.

\begin{theorem}
For a connected graph with at least two vertices, the multiset partition dimension of a graph is at least 4.
\end{theorem}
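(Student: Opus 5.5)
We show that no partition with $t \le 3$ parts is a resolving multiset partition, which gives $\mathrm{mpd}(G)\ge 4$. The key observation is that every vertex lies in some part, so $0 \in r_{mp}(v\mid\Pi)$ for every $v$, with multiplicity exactly one. Also, connectivity of $G$ forces \emph{bichromatic edges}, i.e.\ edges joining two different parts. Both endpoints of such an edge then have a $0$ and a $1$ in their representations.

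The cases $t=1$ and $t=2$ are immediate. If $t=1$, every vertex has representation $\{0\}$, which is impossible when $|V(G)|\ge 2$. If $t=2$, connectivity gives an edge $uv$ with $u\in S_1$ and $v\in S_2$. Then $r_{mp}(u\mid\Pi)=r_{mp}(v\mid\Pi)=\{0,1\}$, a collision.

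The case $t=3$ is the main work. Call a vertex a \emph{boundary vertex} if it has a neighbour in another part. Every boundary vertex $x$ has $r_{mp}(x\mid\Pi)=\{0,1,c(x)\}$ for some integer $c(x)\ge 1$, so it suffices to find two boundary vertices with equal $c$. For a bichromatic edge $xy$ between $S_i$ and $S_j$, both $c(x)$ and $c(y)$ are distances to the third part $S_l$. Hence $|c(x)-c(y)|\le 1$, and resolvability forces $|c(x)-c(y)|=1$. I would split according to whether some vertex is adjacent to both other parts.

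\emph{Case 1: some vertex is adjacent to both other parts.} Say $u\in S_1$ has neighbours $v\in S_2$ and $w\in S_3$, so $c(u)=1$. Then $d(v,S_3)\le d(v,u)+d(u,w)=2$, and since $d(v,S_3)\ne c(u)=1$ we get $d(v,S_3)=2$. Symmetrically $d(w,S_2)=2$. Thus $r_{mp}(v\mid\Pi)=r_{mp}(w\mid\Pi)=\{0,1,2\}$, a collision.

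\emph{Case 2: no vertex is adjacent to two other parts.} Then every boundary vertex has $c\ge 2$, and this is the step I expect to be the main obstacle. My plan is an extremal argument. Take $S_i,S_j$ adjacent and choose a bichromatic edge $uv$ between them minimising $d(u,S_l)=k$ over all such edges, so that $d(v,S_l)=k+1$. Follow a shortest path $u=p_0,p_1,\dots,p_k\in S_l$. By minimality, and because no vertex touches two other parts, the interior vertices should be forced to stay in $S_i$. The last edge $p_{k-1}p_k$ is then a bichromatic edge between $S_i$ and $S_l$, whose endpoints have $c$-values $d(p_{k-1},S_j)$ and $d(p_k,S_j)$ differing by exactly one. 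I would compare these values with $c(u)=k$, $c(v)=k+1$ and with $d(p_{k-1},S_j)\le (k-1)+1=k$ to force two of these four boundary vertices to share the same $c$-value. If this local comparison does not close, the fallback is a global extremal choice: pick the boundary vertex maximising $c$ and show that its bichromatic neighbour, or a vertex on a shortest path to the relevant part, must repeat a value.
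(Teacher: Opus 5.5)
Your cases $t=1$, $t=2$ and Case~1 of $t=3$ are correct. So is the reduction to finding two boundary vertices with the same $c$-value. The claim that $p_1,\dots,p_{k-1}$ lie in $S_i$ also holds, and it follows from your minimality alone.

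The gap is the final step of Case~2, which you leave open, and the local comparison you propose there does not close. From $d(p_{k-1},S_j)\le k$ and $c(u)=k$ you only get $a:=c(p_{k-1})\le k-1$, and then $c(p_k)=a\pm1$. If $a\le k-2$, or if $a=k-1$ and $c(p_k)=k-2$, the four values $k$, $k+1$, $a$, $c(p_k)$ are pairwise distinct, so no collision is forced. The reason is that your extremal choice minimises only over edges between the fixed pair $S_i,S_j$, and it controls only distances to $S_l$. But $c(p_{k-1})=d(p_{k-1},S_j)$ is a distance to a different part, which that choice does not constrain. Your fallback of maximising $c$ is not carried out, and it does not obviously work either.

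The repair is to make the extremal choice global. Let $x$ be a boundary vertex whose value $c(x)=m$ is minimal among \emph{all} boundary vertices, over all three parts. If $m=1$ you are in Case~1. If $m\ge 2$, say $x\in S_1$ has a neighbour in $S_2$ and $d(x,S_3)=m$. Take a shortest path $x=p_0,\dots,p_m$ with $p_m\in S_3$. Then $p_{m-1}\notin S_3$, and $p_{m-1}$ is a boundary vertex adjacent to $S_3$.

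\begin{itemize}
\item If $p_{m-1}\in S_2$, then $c(p_{m-1})=d(p_{m-1},S_1)\le d(p_{m-1},x)=m-1$, contradicting minimality.
\item If $p_{m-1}\in S_1$, then $c(p_{m-1})=d(p_{m-1},S_2)\le (m-1)+1=m$. Hence $c(p_{m-1})=m=c(x)$ with $p_{m-1}\neq x$, a collision.
\end{itemize}

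This single step replaces the repeated descent in the paper's proof. That descent is also what your local step would become if you iterated it on the new edge $p_{k-1}p_k$, whose smaller $c$-value is below $k$.
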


\begin{proof}
If $mpd(G)$ is 1, then there is only one possible representation for the vertices, i.e. $\{0\}$. If $mpd(G)$ is 2, as the two parts of the graph is connected, the two end vertices of the edge connecting the two parts will have the same representation $\{0,1\}$, thus not valid partition.  

Suppose that the $mpd(G)=3$, and there are three parts $S_1$, $S_2$ and $S_3$ in the partition of the graph $G$. Denote the vertices in $S_1$ that have distance one to $S_2$ by $S_1^2$, and similarly define the sets $S_1^3$, $S_2^1$, $S_2^3$, $S_3^1$ and $S_3^2$ as show in the Fig.\ref{fig:mpd=3}. 

It is clear that the representations of the vertices in these sets all contain ${0}$ and ${1}$. Now, consider a vertex $v \in S_1^3$, and let $t \in S_2$ be a vertex closest to $v$, that is, $d(v, S_2) = d(v,t)$. Without loss of generality, assume that a shortest path between $v$ and $t$ contained entirely in $S_3$.

For the vertex $t$, if $d(t,S_1) = d(t,v)$, then both $v$ and $t$ have the same representation $\{0,1,d(v,t)\}$, which is a contradiction. Therefore, we must have $d(t,S_1) < d(t,v)$.

Now consider a shortest path from $t$ to $S_1$. There are two cases. First, suppose that this shortest path passes through $S_3^2$. In this case, we return to a situation analogous to the initial assumption, with the only difference being that the distance between the corresponding vertices is smaller. In the second case, let $d(t,S_1)=d(t,u)$, where $u \in S_1^2$ and shortest path from $t$ to $u$ passes through vertex $w$ from $S_2^1$, now we know $d(u,S_3) \leq d(u, t)+1$. If $d(u,S_3) = d(u, t)+1$, then $w$ and $t$ both have the representation $\{0,1, d(u,t)\}$, thus we know $d(u,S_3) \leq d(u, t)$. If the shortest path between $u$ and $S_3$ go through $S_2$, then we are back to the initial situation with two vertices having smaller distance than the original assumption. If the shortest path between $u$ and $S_3$ is contained in $S_1$, then following the same argument of the second case, we are still back to the initial situation with two vertices having smaller distance than the original assumption.     

As the distance between two vertices are finite, thus repeating the same argument finitely many times yields a contradiction. Thus, the partition is not a valid partition. Hence, we can conclude that $mpd(G) \geq 4$.

\begin{figure}[h!]
    \centering
    \includegraphics[width=0.6\textwidth]{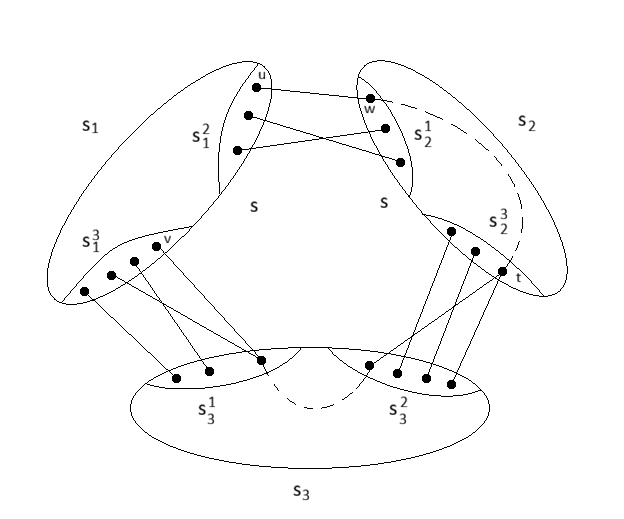}
    \caption{Structure of the graph when mpd=3}
    \label{fig:mpd=3}
\end{figure}

\end{proof}

We can apply the same reasoning as in Lemma 2.2 of \cite{chartrand2000} to the context of the multiset partition dimension, and we have the following. 

\begin{lemma}\label{Lemma2.2partition}
Let $\Pi = \{S_1, S_2, \dots, S_t\}$ be a resolving multiset partition of $V(G)$ and $u,v \in V(G)$. 
If $d(u,w) = d(v,w)$ for all $w \in V(G) \setminus \{u,v\}$, then $u$ and $v$ belong to distinct parts of $\Pi$.   
\end{lemma}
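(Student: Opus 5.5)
The plan is to argue by contradiction, following Lemma 2.2 of \cite{chartrand2000}. Suppose $u$ and $v$ lie in the same part, say $S_i$. We will show that $d(u,S_j)=d(v,S_j)$ for every $j\in\{1,\dots,t\}$. Then the ordered representations $r(u\mid\Pi)$ and $r(v\mid\Pi)$ coincide, so the underlying multisets $r_{mp}(u\mid\Pi)$ and $r_{mp}(v\mid\Pi)$ coincide as well. This contradicts $\Pi$ being a resolving multiset partition. The multiset structure plays no real role: equality of the coordinatewise tuple is stronger than equality of the multisets, so the classical argument transfers verbatim.

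First, for the part $S_i$ containing both vertices we have $d(u,S_i)=0=d(v,S_i)$.

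Next, take $j\neq i$. Neither $u$ nor $v$ lies in $S_j$, so $S_j\subseteq V(G)\setminus\{u,v\}$. By hypothesis $d(u,w)=d(v,w)$ for every $w\in S_j$. Taking the minimum over $w\in S_j$ gives $d(u,S_j)=d(v,S_j)$.

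The only point needing care is this second step: the hypothesis says nothing about $d(u,v)$ itself. We therefore need to check that $u$ and $v$ never serve as minimizers for each other's distance to a part. This holds automatically, because the only part containing either of them is $S_i$, where the distance is $0$ for both. Once this is verified, all coordinates agree and the contradiction follows, so $u$ and $v$ must belong to distinct parts of $\Pi$.
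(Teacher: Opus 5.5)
Your proof is correct and matches the paper's argument. Assume $u,v$ lie in a common part $S_i$, note that both have distance $0$ to $S_i$, and use the hypothesis to get $d(u,S_j)=d(v,S_j)$ for every $j\neq i$, so the representations coincide. Your explicit observation that $S_j\subseteq V(G)\setminus\{u,v\}$ for $j\neq i$ is left implicit in the paper, but it is precisely what justifies the second step.
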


\begin{proof}
Let $\Pi = \{S_1, S_2, \dots, S_t\}$ be the resolving multiset partition, and suppose $u$ and $v$ belong to the same part, say $S_i$ of $\Pi$. 
Then $d(u,S_i) = d(v,S_i) = 0$. 
Since $d(u,w) = d(v,w)$ for all $w \in V(G) \setminus \{u,v\}$, we also have $d(u,S_j) = d(v,S_j)$ for all $j$ ,where $1\le j \neq i \le k$. Therefore, $r_{mp}(u \mid \Pi) = r_{mp}(v \mid \Pi)$ and $\Pi$ is not a resolving partition.
\end{proof}

Following from the Lemma \ref{Lemma2.2partition}, it is easy to see the following.

\begin{observation}
For a tree $T$, if the $mpd(T) \neq \infty$, the leaves from the same parent will belong to different parts of the multiset partition. Furthermore, the parent vertex and one of the leaves must be in the same  part of the partition. 
\end{observation}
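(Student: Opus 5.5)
The first assertion follows directly from Lemma~\ref{Lemma2.2partition}. Let $x$ and $y$ be two leaves of $T$ with the same parent $p$. Every path from $x$ or from $y$ to any other vertex passes through $p$. Hence for every $w \in V(T)\setminus\{x,y\}$ we have $d(x,w) = 1 + d(p,w) = d(y,w)$. Since $\mathrm{mpd}(T)\neq\infty$, a resolving multiset partition $\Pi=\{S_1,\dots,S_t\}$ exists, and the lemma places $x$ and $y$ in distinct parts of $\Pi$. Applying this to each pair of leaves at $p$ shows that all leaves of $p$ lie in pairwise distinct parts.

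For the second assertion I argue by contradiction. Suppose $p \in S_j$ and no leaf of $p$ lies in $S_j$, and fix a leaf $x \in S_i$ with $i\neq j$. Since $x$ has $p$ as its only neighbour, we get $d(x,S_k) = 1 + d(p,S_k)$ for every $k\neq i$. In particular, $d(x,S_j)=1$, and $d(p,S_i)=1$. So
\[
r_{mp}(x\mid\Pi)=\{0\}\cup\{1+d(p,S_k): k\neq i\},\qquad r_{mp}(p\mid\Pi)=\{0,1\}\cup\{d(p,S_k): k\neq i,j\}.
\]
The plan is to use the freedom in the choice of $x$ among the leaves of $p$ (which occupy distinct parts, all different from $S_j$). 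Comparing these multisets across the different leaves should force two of the vertices among $p$ and its leaves to have equal representations. Two quantities are useful for this. The number of entries equal to $1$ in $r_{mp}(p\mid\Pi)$ equals the number of parts meeting $N[p]\setminus S_j$. Each leaf $x$ contributes exactly one entry equal to $1$, namely for $S_j$, together with entries $1+d(p,S_k)\geq 2$ otherwise. I would try to show that two distinct leaves $x\in S_i$ and $x'\in S_{i'}$ then have the same multiset. Their representations differ only in that $x$ has $1+d(p,S_{i'})=2$ where $x'$ has $1+d(p,S_i)=2$, and $0$ in the other slot symmetrically. So they coincide whenever $p$ has at least two leaves.

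This reduction is the main obstacle: it handles every parent with at least two leaves, but it says nothing when $p$ has a single leaf. In that case $x$ and $p$ need not collide in general. I would then read the statement in the weaker, normalising sense that some resolving multiset partition can be chosen with $p$ sharing a part with a leaf. I would check whether moving the unique leaf $x$ into the part of $p$ preserves resolvability. The comparison above suggests this is safe when $S_i=\{x\}$ is not needed as a separate part, and I would treat the general single-leaf case as the delicate step to be verified.
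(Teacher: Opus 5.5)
Your proof of the first sentence is correct and matches the paper's appeal to Lemma~\ref{Lemma2.2partition}. Your proof of the second sentence is also correct when the parent $p$ has at least two leaves. If $p\in S_j$ and two leaves lie in $S_i$ and $S_{i'}$ with $j\notin\{i,i'\}$, both leaves get the multiset $\{0,1,2\}\cup\{1+d(p,S_k):k\notin\{i,i',j\}\}$.

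The paper gives nothing beyond a pointer to Lemma~\ref{Lemma2.2partition}, and that lemma by itself yields only the first sentence. Your twin-leaf computation is exactly the step the paper leaves out. The restriction to parents with at least two leaves is evidently the intended reading, given the neighbouring observation about three leaves.

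Your last paragraph goes the wrong way. The single-leaf case is not a delicate step still to be verified: the claim is false there, and so is your ``normalising'' weakening.
\begin{itemize}
\item The literal claim already fails in the paper's own resolving partition of $P_n$. It has $S_4=\{v_n\}$ and $S_3=\{v_{n-1}\}$, so the leaf $v_n$ and its parent lie in different parts.
\item The weakening fails too. In $P_5$, no resolving multiset partition puts the leaf $v_1$ in the same part as its parent $v_2$:
  \begin{itemize}
  \item Partitions with at most three parts never resolve, by the paper's lower bound $\mathrm{mpd}\ge 4$ or by a direct check.
  \item The only partition with at least four parts that has $v_1,v_2$ together is $\{\{v_1,v_2\},\{v_3\},\{v_4\},\{v_5\}\}$. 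In it, $v_3$ and $v_4$ both have representation $\{0,1,1,2\}$.
  \end{itemize}
\end{itemize}
So moving a unique leaf into its parent's part can destroy resolvability. The right conclusion is to state the second assertion only for parents with at least two leaves. With that restriction, your argument is already a complete proof.
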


\begin{observation}
If $T$ is a tree contains a vertex which is adjacent to at least three leaves, then $mpd(T)=\infty$  
\end{observation}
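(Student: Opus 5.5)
Suppose $T$ has a vertex $x$ adjacent to three leaves $\ell_1,\ell_2,\ell_3$, and suppose for contradiction that $\Pi=\{S_1,\dots,S_t\}$ is a resolving multiset partition of $T$. The plan is to show that two of the vertices $x,\ell_1,\ell_2,\ell_3$ always receive the same multiset representation.

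First, apply Lemma~\ref{Lemma2.2partition}. Any two leaves $\ell_i,\ell_j$ sharing the parent $x$ satisfy $d(\ell_i,w)=d(\ell_j,w)$ for every $w\notin\{\ell_i,\ell_j\}$. Hence $\ell_1,\ell_2,\ell_3$ lie in three distinct parts, say $\ell_i\in S_i$ for $i=1,2,3$. The vertex $x$ lies in some part $S_x$, and we split into two cases: $S_x$ is one of $S_1,S_2,S_3$, or $S_x$ is a fourth part.

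The key computation is $r_{mp}(\ell_i\mid\Pi)$. Every vertex other than $\ell_i$ is reached from $\ell_i$ through $x$, so for every part $S_j$ not containing $\ell_i$ we have $d(\ell_i,S_j)=d(x,S_j)+1$. Also $d(\ell_i,S_i)=0$, because $\ell_i\in S_i$. The remaining question is $d(x,S_i)$. If $S_i\neq S_x$, we check whether $S_i=\{\ell_i\}$ or not; in either case $d(x,S_i)$ is $1$ or less. The cleanest formulation is to compare $r_{mp}(\ell_i)$ with $r_{mp}(\ell_j)$ as multisets over the parts $S_k$ with $k\neq i,j$ (these contribute identical values $d(x,S_k)+1$ to both) together with the two special parts $S_i,S_j$. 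The multiset $r_{mp}(\ell_i)$ contains $0$ and $d(x,S_j)+1$, while $r_{mp}(\ell_j)$ contains $d(x,S_i)+1$ and $0$. They coincide exactly when $d(x,S_i)=d(x,S_j)$.

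So it suffices to find two of the three leaves, $\ell_i$ and $\ell_j$, with $d(x,S_i)=d(x,S_j)$. Each $d(x,S_i)\in\{0,1\}$, since $\ell_i\in S_i$ is adjacent to $x$. At most one index $i$ has $d(x,S_i)=0$, namely $S_i=S_x$. Therefore at least two of the three indices satisfy $d(x,S_i)=1$, and the corresponding leaves have equal multiset representations, a contradiction. Thus no resolving multiset partition exists, and $mpd(T)=\infty$.

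The only delicate point is justifying $d(\ell_i,S_k)=d(x,S_k)+1$ for every part $S_k$ not containing $\ell_i$. This holds because $\ell_i$ is a leaf with unique neighbour $x$ and $\ell_i\notin S_k$, so every path from $\ell_i$ to $S_k$ passes through $x$. The pigeonhole step over $\{0,1\}$ is what needs three leaves; with only two leaves, both could be separated when $x$ shares a part with one of them, which matches the earlier Observation.
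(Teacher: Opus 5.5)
Your proof is correct and follows essentially the same route the paper intends. Lemma~\ref{Lemma2.2partition} puts the three sibling leaves in distinct parts, the parent can share a part with at most one of them, and two leaves whose parts avoid the parent then have identical representations (the fact behind the paper's preceding Observation); you make explicit the computation $d(\ell_i,S_k)=d(x,S_k)+1$ and the pigeonhole over $\{0,1\}$ that the paper leaves as ``easy to see.''
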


Apparently, this parameter behaves differently from some related graph parameters. 
For example, in \cite{chartrand2000} Theorem 1.1, it is shown that the partition dimension 
of a graph is at most the metric dimension plus one. However, we are not able to establish a similar result in the multiset setting. 

In \cite{bong2021some}, Novi et al.\ provided a construction of trees that achieve a prescribed value of the multiset metric dimension. At first, we expected that these trees would also exhibit a large multiset partition dimension, however, this turns out not to be the case. The following are examples.

Let $p_i$, $i\ge1$, be the vertices of the spine path, and let $b_i$ be the branch vertex attached to $p_i$ with two leaves $L_i,L_i'$ adjacent to $b_i$. The branches alternate above and below the spine along the path, and the position of the branch vertices $b_i$ on the spine is given by $t_i=1+ \frac{i(i-1)}{2}, \quad i \ge 1.$
The parts are $S_1$ (upper-left), $S_2$ (upper-right), $S_3$ (lower), and $S_4$ (largest, central).

\begin{figure}[h!]
    \centering
    \includegraphics[width=0.8\textwidth]{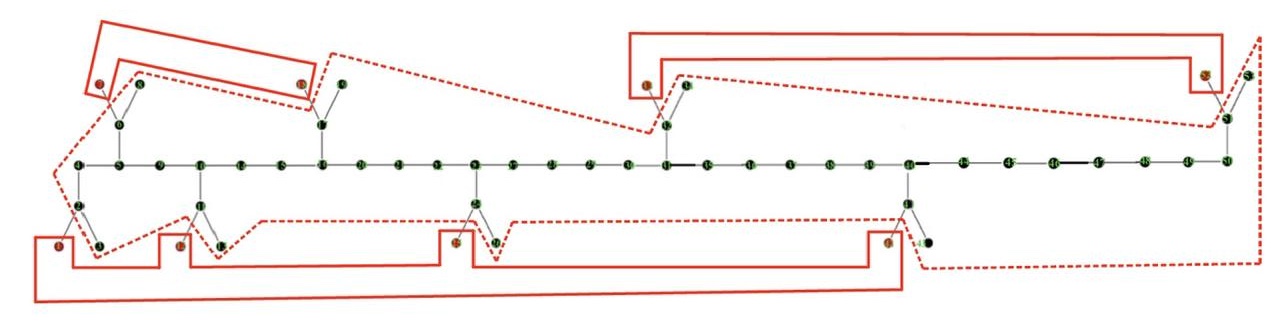}
    \caption{The partitions are 
$S_1=\{L_2,L_{7}\}$, 
$S_2=\{L_{16},L_{29}\}$, 
$S_3=\{L_1,L_{4},L_{11},L_{22}\}$, 
and 
$S_4 = V(G)\setminus(S_1 \cup S_2 \cup S_3)$,
resulting in a total of four parts.
}
    \label{fig:G9withmpd=4}
\end{figure}
\FloatBarrier

\begin{figure}[h!]
    \centering
    \includegraphics[width=1.0\textwidth]{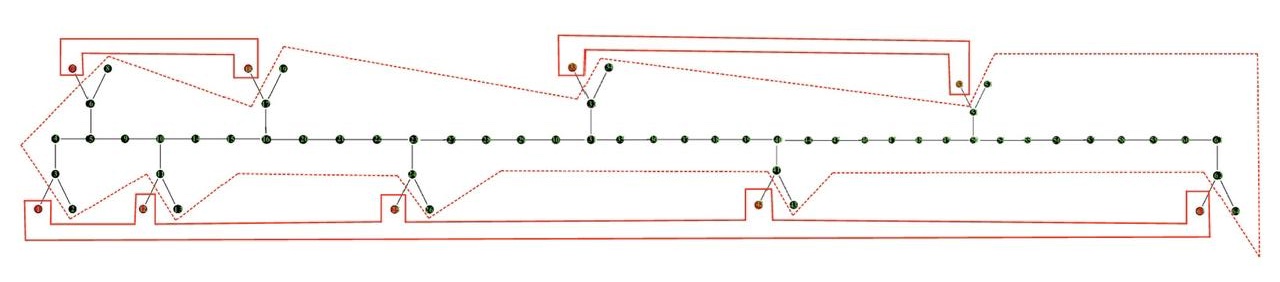}
    \caption{For the graph, the resolving multiset partition 
$\Pi=\{S_1,S_2,S_3,S_4\}$. 
The partitions are 
$S_1=\{L_2,L_{7}\}$, 
$S_2=\{L_{16},L_{29}\}$, 
$S_3=\{L_1,L_{4},L_{11},L_{22},L_{37}\}$, 
and 
$S_4 = V(G)\setminus(S_1 \cup S_2 \cup S_3)$,
resulting in a total of four parts.
}
    \label{fig:G10withmpd=4}
\end{figure}
\FloatBarrier

\begin{figure}[h!]
    \centering
    \includegraphics[width=1.0\textwidth]{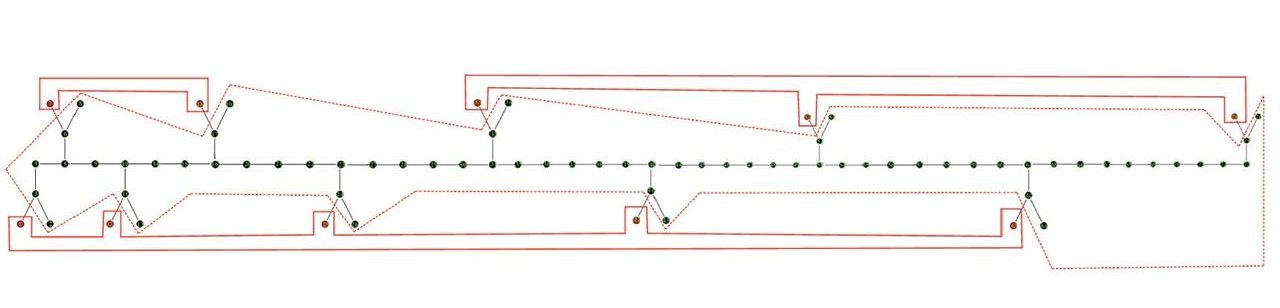}
    \caption{For the graph, the resolving multiset partition 
$\Pi=\{S_1,S_2,S_3,S_4\}$. 
The partitions are 
$S_1=\{L_2,L_{7}\}$, 
$S_2=\{L_{16},L_{29},L_{46}\}$, 
$S_3=\{L_1,L_{4},L_{11},L_{22},L_{37}\}$, 
and 
$S_4 = V(G)\setminus(S_1 \cup S_2 \cup S_3)$,
resulting in a total of four parts.
}
    \label{fig:G11withmpd=4}
\end{figure}
\FloatBarrier

Based on the construction of this tree, when the tree grows, a new vertex $b_j$ is added with two leaves. Since the positions of the two leaves alternate up and down along the path, their assignment to the part depends on their location. If the two added leaves are attached to the upper side of the path, one of them is placed in part $S_2$, if they are attached to the lower side, one of them is placed in part $S_3$. All other newly added vertices are assigned to part $S_4$. 

If $L_i$ and $L_j$ be two leaves of the same part where $i<j$, they must be from different branches. Without loss of generality, we assume that both vertices are from $S_2$, clearly, both leaves have distance $0$ to its own partition $S_2$, and distance $1$ to the partition $S_4$. Their distance to $S_3$ are $d(L_i, L_{t})$ and  $d(L_j, L_{w})$, where $t<i$ and $i<w<j$, based on the construction of the tree, these two distances are different. If both vertices belong to the $S_4$, they might have the same distance to $S_3$, however, it is easy to see they must have different distances to $S_1$ and $S_2$, or they have the same multiset distances to $S_1$ and $S_2$, but then will have different distance to $S_3$, thus will have different representation. The other cases such as two vertices are from different partition can also be easily verified. Thus the multiset partition dimension of the graph is four. 

This observation allows us to conclude the following. 

\begin{theorem}
Let $G$ be a graph with finite multiset metric dimension $md(G)$ and multiset partition dimension $mpd(G)$. Then $|mpd(G) - md(G)|$ can be arbitrarily large.
\end{theorem}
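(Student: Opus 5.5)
The plan is to exhibit an infinite family of graphs in which the multiset metric dimension grows without bound while the multiset partition dimension stays fixed at $4$. The natural family is the one already described: the trees $T_n$ of Novi et al.~\cite{bong2021some}. They consist of a spine path $p_1p_2\dots$ together with branch vertices $b_i$ attached at spine positions $t_i = 1+\frac{i(i-1)}{2}$, alternating above and below the spine, and each $b_i$ carries two leaves $L_i, L_i'$. From \cite{bong2021some} I will quote that this construction realises any prescribed value of the multiset metric dimension. In particular $md(T_n)$ is finite and $md(T_n)\to\infty$ as the number $n$ of branches grows. Because this input is imported, I will state precisely which trees in that paper attain which value, so that the index $n$ controlling $md$ is exactly the number of branches.

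The second step is to show $mpd(T_n)=4$ for every $n$. The lower bound $mpd(T_n)\ge 4$ comes directly from the first theorem of this section. For the upper bound I will give an explicit partition generalising Figures~\ref{fig:G9withmpd=4}--\ref{fig:G11withmpd=4}:
\begin{itemize}
\item $S_1=\{L_2,L_7\}$;
\item $S_2$ contains one leaf of each upper branch $b_i$ with $i\ge 4$ on the upper side;
\item $S_3$ contains one leaf of each lower branch;
\item $S_4$ is everything else.
\end{itemize}
This respects the Observation that sibling leaves must lie in different parts. I then verify that the partition resolves, splitting into cases according to which parts $u$ and $v$ lie in.
\begin{enumerate}
\item \emph{Two leaves in the same small part.} Both have representation of the form $\{0,1,a,b\}$. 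The remaining distances are distances along the spine to the nearest leaf of the other parts. Since the gaps $t_{i+1}-t_i=i$ are strictly increasing, these distances are pairwise distinct.
\item \emph{Vertices in $S_4$.} These are spine vertices, branch vertices, and unused leaves. They contain exactly one $0$. Their number of $1$'s distinguishes branch vertices (adjacent to a small-part leaf) from the others. Among spine vertices, the multiset of distances to the nearest leaves of $S_1,S_2,S_3$ is determined by their position relative to the increasing gaps.
\item \emph{Vertices from different parts.} These are separated by counting zeros and ones, or by the same gap argument.
\end{enumerate}

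The main obstacle is case (2): spine vertices of $S_4$ lying between consecutive branches. Two such vertices could a priori have the same multiset of distances to $S_1,S_2,S_3$ by symmetry, for instance a vertex near one end of a gap and another near the opposite end. I would first try to rule this out by an explicit formula. For a spine vertex at position $x$ with $t_i\le x\le t_{i+1}$, I will write its distances in terms of $x-t_i$, $t_{i+1}-x$ and the distance to the nearest branch of the third part. I will then use that the gaps $i$, $i+1$, $i+2$ are distinct and strictly increasing to show that the resulting multiset determines $i$ and $x$. If a symmetry collision does occur, I will adjust the choice of which leaves go to $S_1$ so as to break the symmetry near the start of the spine.

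Having both steps, $|mpd(T_n)-md(T_n)| = md(T_n)-4$, which is unbounded as $n$ grows. This proves the theorem.
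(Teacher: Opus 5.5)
Your proposal follows the paper's argument: take the trees of Novi et al.~\cite{bong2021some}, whose multiset dimension is finite and unbounded (any multiset resolving set must contain a leaf from each twin pair), and show $\mathrm{mpd}=4$ with the four-part partition generalising the figures, via a case check that the paper itself only sketches. One correction: in the figures' spine-position labelling, $S_1=\{L_2,L_7\}$ uses the upper branches at positions $2$ and $7$, so $S_2$ must take one leaf from each upper branch at positions $16,29,46,\dots$; your condition $i\ge 4$ also captures the branch at position $7$, and this either overlaps $S_1$ or places that branch vertex in a part different from both of its leaves, which makes $L_7$ and $L_7'$ indistinguishable.
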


Furthermore, we are able to construct graphs with any prescribed value of the multiset partition dimension, for example, see Figure \ref{fig:MPD=7withk7}.

\begin{figure}[h!]
    \centering
    \includegraphics[width=8cm, height=6cm]{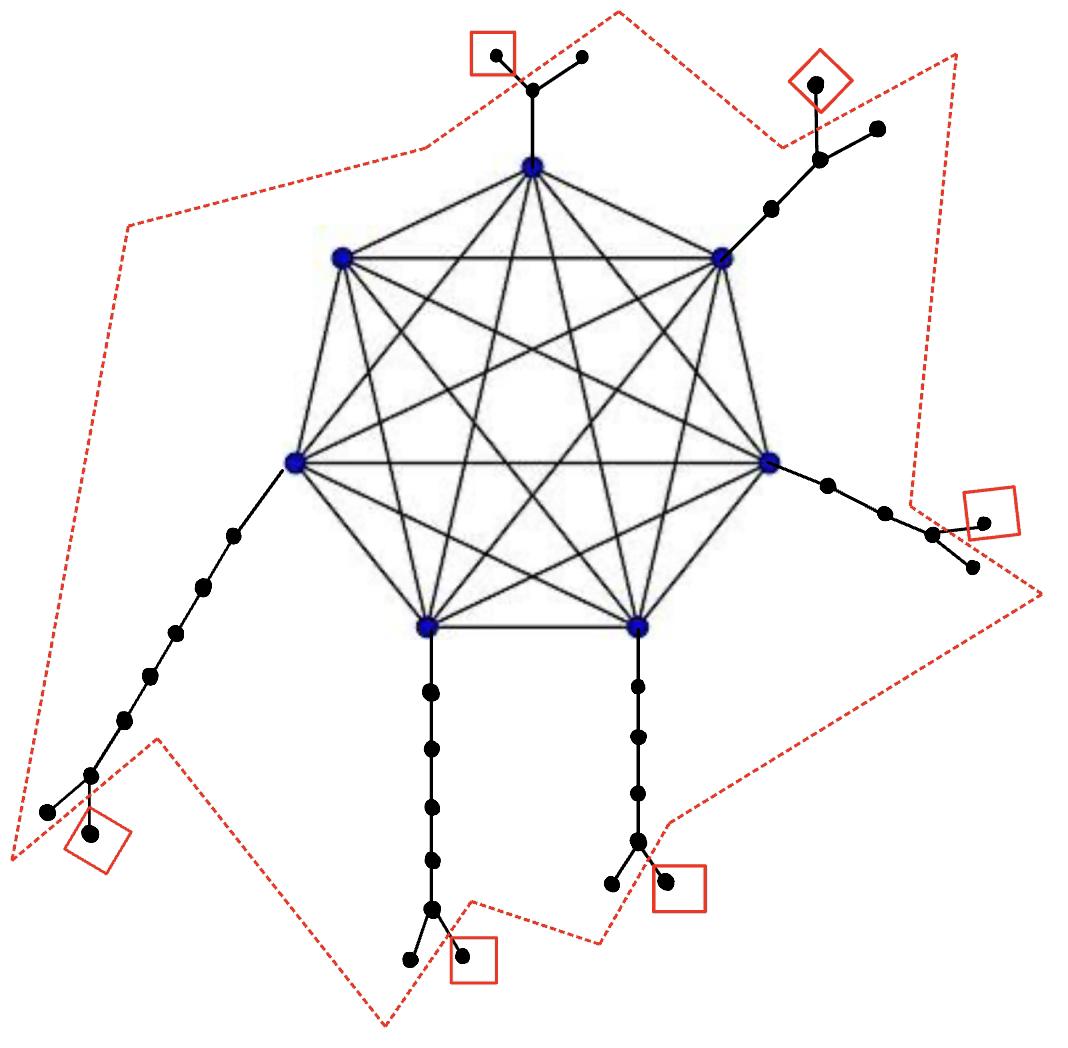}
        \caption{A graph with $\operatorname{mpd}=7$. Parts are highlighted by red boxes}
    \label{fig:MPD=7withk7}
\end{figure}
\FloatBarrier

\begin{theorem}
For any positive integer $k \ge 6$, there exists a graph $G$ whose multiset partition dimension is $k$, that is, $\mathrm{mpd}(G) = k$.
\end{theorem}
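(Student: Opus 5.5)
The plan is to build, for each $k\ge 6$, an explicit graph $G_k$ in which Lemma~\ref{Lemma2.2partition} forces many vertices into pairwise distinct parts. That gives the lower bound $\mathrm{mpd}(G_k)\ge k$. We then exhibit a resolving multiset partition with exactly $k$ parts for the upper bound. The natural choice, generalizing Figure~\ref{fig:MPD=7withk7}, is a star-like structure. Take a central vertex $c$ and attach $k$ pendant leaves $x_1,\dots,x_k$ to it (these are pairwise twins in the sense of Lemma~\ref{Lemma2.2partition}). Then attach pendant paths of pairwise distinct lengths, or further gadgets, so that the leaves become distinguishable once they lie in different parts. With $k$ twin leaves, the lemma immediately gives $\mathrm{mpd}(G_k)\ge k$, since any resolving multiset partition must place all $x_i$ in distinct parts.

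For the upper bound, the partition will be $S_i=\{x_i\}\cup A_i$ for $i=1,\dots,k$. Here the sets $A_i$ distribute $c$ and the remaining vertices of the gadgets so that representations differ.

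\begin{itemize}
\item The leaf $x_i$ has representation $\{0\}\cup\{$distances to the other $S_j\}$. Since $c$ lies in some part, say $S_1$, a singleton-leaf $x_i$ has multiset $\{0,1,2,\dots\}$ determined by where the nearest vertex of each other part lies.
\item To separate the leaves, I would make the parts themselves asymmetric. For instance, $S_1$ contains $c$ together with a long path. $S_2,\dots,S_k$ each receive a leaf plus a path segment of a distinct length $\ell_i$ hanging off the end of some pendant path, so each part's ``footprint'' at distance $\ge 2$ is different.
\item The checks are then: the twins $x_i$ differ because each sees the other parts at distances $1$ or $2$ but itself at $0$. The count of $1$'s versus $2$'s depends on which part contains $c$, so this alone is insufficient. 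Hence I would instead attach to $c$ a second level, namely distinct-length paths $P_{\ell}$ whose vertices are assigned so that the number of parts at distance exactly $2,3,\dots$ from each $x_i$ differs.
\end{itemize}

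The hardest part is this upper-bound verification. We need a uniform assignment for all $k\ge 6$ under which every pair of vertices (leaves, $c$, path vertices) gets distinct multisets; the path vertices deep in a long path tend to collide. I would first try a design in which the $k$ twin leaves are the only ``symmetric'' vertices, and every other vertex lies on a single long path through $c$. Consecutive path segments go to parts in a carefully chosen order, so that the multiset of distances from a path vertex encodes its position, by analogy with the tree construction with $t_i=1+i(i-1)/2$ above, using triangular spacings to make all distance profiles distinct. Then I would verify case by case: two vertices in the same part, and two vertices in different parts. The restriction $k\ge 6$ presumably arises because for small $k$ there are too few parts to give all path vertices distinct profiles while keeping each leaf's multiset unique. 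I would check the base case $k=6$ by hand and then argue that adding one more twin leaf with its own part preserves resolvability. The new part only adds one new entry to each multiset, and that entry is distinct across old vertices since it equals one more than their distance to $c$.
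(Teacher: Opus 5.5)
There is a genuine gap. A vertex $c$ with $k\ge 3$ pendant leaves already forces $\mathrm{mpd}(G_k)=\infty$, whatever else is attached to the graph, so your construction cannot work. The paper records this for trees in the second Observation of Section 2, and your final design (leaves plus a single long path through $c$) is a tree. The argument is purely local, though.

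Let $x_1,x_2,x_3$ be leaves at $c$. By Lemma~\ref{Lemma2.2partition} they lie in distinct parts, $x_i\in S_i$. For $i\neq j$, the vertices $x_i,x_j$ are at equal distance from every vertex other than themselves, hence from every part other than $S_i,S_j$. So $r_{mp}(x_i\mid\Pi)$ and $r_{mp}(x_j\mid\Pi)$ can differ only in the entries coming from $S_i,S_j$, namely $\{0,d(x_i,S_j)\}$ versus $\{0,d(x_j,S_i)\}$. Since $x_j\in S_j$ is at distance $2$ from $x_i$, we have $d(x_i,S_j)=1$ if $c\in S_j$ and $d(x_i,S_j)=2$ otherwise. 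Thus $x_i$ and $x_j$ are separated only if $c$ lies in exactly one of $S_i,S_j$. Since $c$ lies in at most one of $S_1,S_2,S_3$, some pair of leaves collides. Your lower bound $\mathrm{mpd}(G_k)\ge k$ therefore holds only vacuously, and no $k$-part resolving partition exists.

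You noticed this difficulty in your third bullet, but hanging further paths or gadgets off $c$ cannot fix it. The entries from $S_i,S_j$ depend only on where $c$ lies, and all other entries coincide for twins no matter what is attached. The same computation shows that in any graph with finite $\mathrm{mpd}$, every twin class has at most two vertices, and adjacent twins are impossible. So a lower bound of $k\ge 6$ can never come from applying Lemma~\ref{Lemma2.2partition} to one family of mutual twins; you need a different lower-bound mechanism.

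Your induction step fails for the same reason, since each new leaf at $c$ is one more twin. It also fails independently, in two ways:
\begin{itemize}
\item Appending one entry to every multiset does not preserve distinctness: $\{0,3\}$ with $5$ appended and $\{0,5\}$ with $3$ appended both give $\{0,3,5\}$.
\item The appended entry $d(v,c)+1$ is not injective on $V(G)$.
\end{itemize}

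The paper uses only twin \emph{pairs}, placed so that different pairs are not twins of one another. Take a core $K_k$ and attach to $k-1$ of its vertices pendant paths of distinct lengths $1,\dots,k-1$, each ending in two leaves. One leaf of each pair forms a singleton part, and all remaining vertices form one part. The two leaves of a pair then see $\{0,1,\dots\}$ versus $\{0,2,\dots\}$, and the distinct path lengths separate everything else. The lower bound is argued by showing that merging singleton parts makes two core vertices collide, for example $v_{3,0}$ and $v_{k,0}$, rather than by a twin count.
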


\begin{proof}
Let $k \ge 6$. We construct the graph $G_k$ by taking the complete graph $K_{k}$ as the core and attaching to $k-1$ of its vertices a pendant path $L_i$ of length $\ell_i$, where $\ell_i=i$ for $ 1 \le i \le k-1$. At the end of each pendant path two leaves are attached. 
The vertices along the $i$-th path are labeled as

\[
v_{i,0},v_{i,1},\dots v_{i,\ell_i},v_{i,\ell_{i}+1}',v_{i,\ell_{i}+1}''
\]

where $v_{i,0}$ is the core vertex of the complete graph, $v_{i,1},\dots,v_{i,\ell_{i}}$ are the vertices along the path,
and $v_{i,\ell_{i}+1}',v_{i,\ell_{i}+1}''$
are the two leaves attached at the end of the path.

The pair of vertices $v_{i,\ell_{i}+1}'$ and $v_{i,\ell_{i}+1}''$ satisfies the Lemma~\ref{Lemma2.2partition}, so they must both be placed in different partitions and this holds for each of the $k-1$ tails. 

Let $\Pi = \{S_1, S_2, \dots, S_{k-1}, S_k\}$ be a multiset resolving partition where $S_i$ is a singleton partition that contains one of the two leaves $v_{i,\ell_{i}+1}'$ or $v_{i,\ell_{i}+1}''$ for $i=1,\dots,k-1$. Without loss of generality, we take $S_i$ to contain $v_{i,\ell_{i}+1}'$, and $S_k$ to contain all remaining vertices. 

The representations for the leaves are as follows:

\[
r_{mp}(v_{i,\ell_{i}+1}''|\Pi)=
    \{0,2\} \cup \{\ell_i+\ell_j+3 : i \ne j\},\\
\] 

\[
r_{mp}(v_{i,\ell_{i}+1}'|\Pi)=
    \{0,1\} \cup \{\ell_i+\ell_j+3 : i\ne j\}.
\]

Here, $0$ corresponds to the distance from the singleton partition $S_i$ to itself, and $1$ corresponds to its distance to the large partition $S_k$.

The representations for the remaining vertices in the $S_k$ partition are: 
\[
r_{mp}(v_{j,c} \mid \Pi) = \{ d(v_{j,c},S_1), d(v_{j,c},S_2), \dots, d(v_{j,c},S_{k-1}), d(v_{j,c},S_k) \},
\]

where the distance $d(v_{j,c},S_k)=0$ and $d(v_{j,c},S_i)$ are

\[
d(v_{j,c},v_{i,\ell_{i}+1}')=
\begin{cases}
    \left| \ell_i - c +1 \right| ,   i=j\\
    c+2+\ell_i,        i \ne j.
\end{cases}
\]
where $j$ denotes the index of the tail on which the vertex lies, and $c$ denotes the number of steps from the vertex to the nearest core vertex, with $c=0,1,\dots,\ell_j$.
Since all tails have distinct lengths and each core vertex is attached to a different tail, the distance patterns to the singleton partitions are different. Therefore, all vertices have 
unique multiset partition representations.  

If any of the $S_i$s can be combined to form a single part, for example, $v'_{2,3}$ and $v'_{3,4}$ forming a single part, then it is easy to check the vertex $v_{3,0}$ and $v_{k,0}$ will have the same representation, as they have the same multiset distance to all the leaf parts.  

\end{proof}

Note, the similar construction can be used to construct a graph with $mpd(G)=5$, the only modification is to extend the longest tail by one. 

\section{Multiset Partition Dimension of Some Basic Graph Classes}

In this section, we determine the multiset partition dimension of several basic graph classes.

For a path, it is straightforward to see that:
\begin{theorem}
For a path $P_n$ with $n \ge 5$, the multiset partition dimension is 4.
\end{theorem}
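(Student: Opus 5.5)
The plan has two halves. The lower bound $\mathrm{mpd}(P_n)\ge 4$ is immediate from the first theorem of Section~2, since $P_n$ is connected with at least two vertices. It therefore suffices to exhibit, for every $n\ge 5$, a partition of $V(P_n)$ into four parts that is a resolving multiset partition.

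Label the path $v_1v_2\cdots v_n$. I would take
\[
S_1=\{v_1\},\quad S_2=\{v_2,v_3\},\quad S_3=\{v_5\},\quad S_4=V(P_n)\setminus(S_1\cup S_2\cup S_3).
\]
This is a genuine four-part partition for every $n\ge 5$, since $v_4\in S_4$. Earlier attempts such as $S_1=\{v_1\},S_2=\{v_2\},S_3=\{v_3\}$ fail because $v_1$ and $v_4$ collide. Making $S_2$ a pair and pushing $S_3$ out to $v_5$ breaks these symmetries.

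Next I compute the representations directly. The distances to $S_1,S_2,S_3,S_4$ are, in order:
\begin{align*}
v_1 &: 0,1,4,3 \;\Rightarrow\; \{0,1,3,4\},\\
v_2 &: 1,0,3,2 \;\Rightarrow\; \{0,1,2,3\},\\
v_3 &: 2,0,2,1 \;\Rightarrow\; \{0,1,2,2\},\\
v_4 &: 3,1,1,0 \;\Rightarrow\; \{0,1,1,3\},\\
v_5 &: 4,2,0,1 \;\Rightarrow\; \{0,1,2,4\}.
\end{align*}
For $v_5$, the distance $1$ to $S_4$ is realised by $v_6$ when $n\ge 6$, and by $v_4$ when $n=5$. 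For $j\ge 6$ we have $v_j\in S_4$, with distances $j-1$, $j-3$, $j-5$ and $0$, so $r_{mp}(v_j\mid\Pi)=\{0,\,j-5,\,j-3,\,j-1\}$.

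To finish, check distinctness. The five multisets for $v_1,\dots,v_5$ are pairwise distinct by inspection. For $j\ge 6$, the maximum element $j-1\ge 5$ is strictly larger than every entry appearing for $v_1,\dots,v_5$, whose entries are all at most $4$. The maximum element is also distinct for different $j$. Hence all representations are distinct, so $\Pi$ is resolving and $\mathrm{mpd}(P_n)\le 4$.

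The only delicate step is finding a partition that avoids the coincidences near the end of the path, where the distances to the small parts are not yet monotone. Once the partition above is fixed, the verification is a routine finite check plus the monotonicity argument for $j\ge 6$.
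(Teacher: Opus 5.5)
Your proof is correct and matches the paper's approach. You use an explicit four-part partition whose five end vertices receive exactly the paper's multisets $\{0,1,3,4\}$, $\{0,1,2,3\}$, $\{0,1,2,2\}$, $\{0,1,1,3\}$, $\{0,1,2,4\}$, separate all remaining vertices by a maximum entry of at least $5$, and take the lower bound from the first theorem (which the paper leaves implicit). The only difference is cosmetic: the paper merges the rest of the path into the part containing the gadget endpoint next to the pair, giving the contiguous part $\{v_1,\dots,v_{n-4}\}$, whereas you attach it beyond $v_5$ and merge it into the part of $v_4$; the two partitions coincide for $n=5$.
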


\begin{proof}
Let $P_n = v_1, v_2, \ldots, v_n$ and let $\Pi = \{S_1, S_2, S_3, S_4\}$ be the partition of $V(P_n)$ where
\[
S_1 = \{v_1, \ldots, v_{n-4}\}, \quad
S_2 = \{v_{n-3}, v_{n-2}\}, \quad
S_3 = \{v_{n-1}\}, \quad
S_4 = \{v_n\}.
\]

For a vertex $v_i$ in $S_1 = \{v_1, \ldots, v_{n-4}\}$, its representation is:
\[
r_{mp}(v_i \mid \Pi) = 
\{\, 
0,\ 
n-3-i,\ 
n-1-i,\ 
n-i 
\,\}, 
\quad 1 \le i \le n-4.
\]

Where \(0\) corresponds to distance of $v_i$ to the set $S_1$ which containing the vertex itself. The term \(n-3-i\) gives the distance to the nearest vertex in \(S_2\), while the remaining terms are the distances to sets \(S_3\) and \(S_4\). The representation at $i = n-4$ is \[
r_{mp}(v_{n-4} \mid \Pi) = \{0,1,3,4\}.
\] 
For any other vertex $v_i$ in $S_1$, the representation has each nonzero coordinate increased by $n-4-i$, thus all distinct.

The representations for the vertices in $S_2$, $S_3$ and $S_4$ are listed below

\[r_{mp}(v_{n} \mid \Pi) = \{0,1,2,4\}.\]
\[r_{mp}(v_{n-1} \mid \Pi) = \{0,1,1,3\}.\]

\[r_{mp}(v_{n-2} \mid \Pi) = \{0,1,2,2\}.\]

\[r_{mp}(v_{n-3} \mid \Pi) = \{0,1,2,3\}.\]

It is straightforward to see the multiset partition representations for all vertices are different. Hence, $mpd(P_n) = 4$.

\end{proof}

 \begin{figure}[ht]
    \centering
    \includegraphics[width=0.55\textwidth]{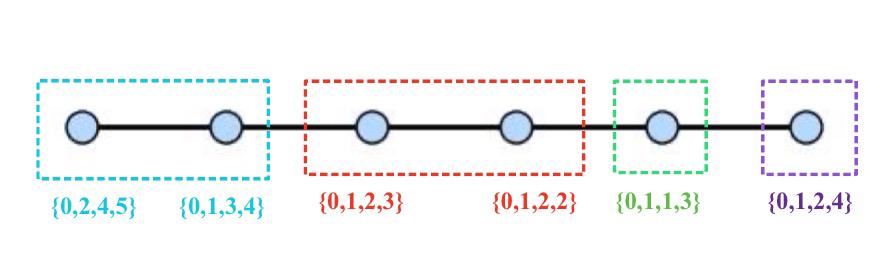}
    \caption{An example illustrating the multiset partition dimension following the patition schema used in the proof for $P_6$.}
    \label{fig:sample}
\end{figure}
\FloatBarrier

For a general path graph $P_n$, see Figure~\ref{fig:GeneralPathwithmpd=4} 

\begin{figure}[h!]
    \centering
\includegraphics[width=1.0\textwidth]{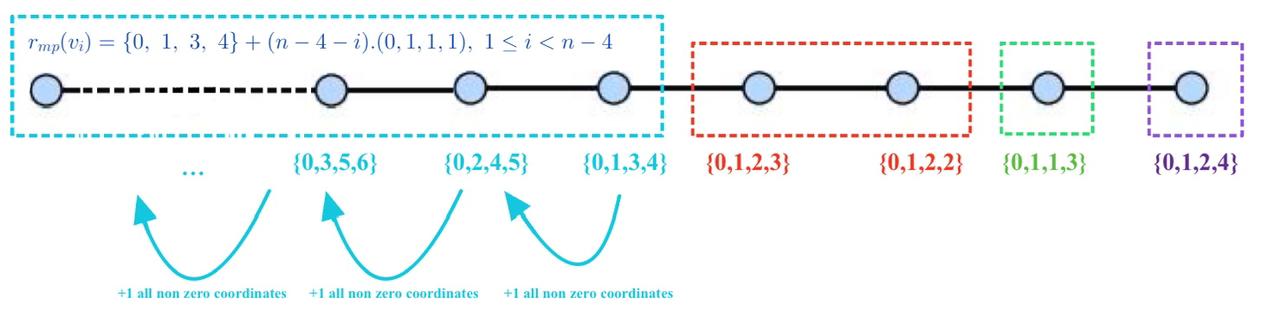}
    \caption{For the path graph $P_{n}$, the partition 
    is illustrated together with the multiset partition representations of vertices, showing that $\operatorname{mpd}(P_{n}) = 4$.}
    \label{fig:GeneralPathwithmpd=4}
\end{figure}
\FloatBarrier

The above partition is not unique, for example, the following is a different partition.

\begin{figure}[ht]
    \centering
    \includegraphics[width=0.55\textwidth]{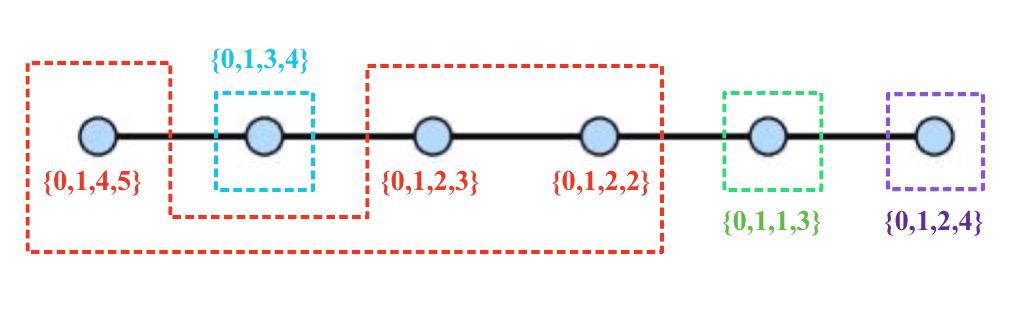}
    \caption{An example of multiset partition dimension}
    \label{fig:sample}
\end{figure}
\FloatBarrier

For a circle, we have the following
\begin{theorem}
 Let $C_n$ be a cycle graph. Then,
\[
\mathrm{mpd}(C_n) =
\begin{cases}
\infty, & \text{if } n \le 7, \\
5, & \text{if } n = 8,\\[2mm]
4, & \text{if } n \ge 9.
\end{cases}
\]
\end{theorem}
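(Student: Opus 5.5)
The proof splits into three tasks: a construction of a resolving multiset partition with four parts for every $n\ge 9$, a construction with five parts for $n=8$ together with a proof that four parts do not suffice there, and an impossibility argument for $n\le 7$. Throughout, the lower bound $\mathrm{mpd}(G)\ge 4$ from the first theorem of Section 2 is used.

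\textbf{Upper bounds.} Write $C_n=v_0v_1\cdots v_{n-1}v_0$. For $n\ge 9$ I would adapt the path partition: take $S_2=\{v_1,v_2\}$, $S_3=\{v_4\}$, $S_4=\{v_5\}$ or a similar small asymmetric pattern, with $S_1$ the remaining long arc. For $v_i\in S_1$, the distances to the three small parts are governed by $\min(i,n-i)$-type expressions. The small parts must be placed with unequal gaps, so that the clockwise and counterclockwise sides of the long arc produce different multisets. A vertex near one end of the long arc then sees the small sets at offsets like $\{1,3,4\}$, and a vertex near the other end sees offsets like $\{1,2,4\}$. I would tabulate these representations as functions of $i$, splitting into the cases $i\le n/2$ and $i>n/2$, and check injectivity. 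The few vertices of $S_2\cup S_3\cup S_4$ are listed explicitly, as was done for $P_n$. If the first candidate pattern fails for small $n$ such as $9$ or $10$, I would adjust the gaps and verify those cases by hand. For $n=8$ I would exhibit an explicit partition into five parts and list all eight representations.

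\textbf{Lower bounds.} For $n\le 7$, every part's distance is at most $\lfloor n/2\rfloor\le 3$. Consider any partition into $t\ge 4$ nonempty parts. Each vertex has representation $\{0,\dots\}$ with at least one $1$, since some neighbour lies in another part unless the vertex sits inside a run of its own part. I would count the possible multisets: entries lie in $\{0,1,2,3\}$, with exactly one $0$. Then I would argue by runs, i.e. maximal arcs of a single part. The two endpoints of any run of length at least $2$ are the key objects. For a cycle, reflection symmetry within a run forces coincidences unless the runs are arranged asymmetrically, and with $n\le 7$ there is too little room for that. Concretely, I would split by the number of parts $t=4,\dots,7$ and by the multiset of run lengths. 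Many parts force many singleton runs, and their representations $\{0,1,1,\dots\}$ collide. For $t=n$, all vertices have identical representations by vertex-transitivity. This case analysis, possibly trimmed with symmetry of $C_n$, rules out every partition.

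\textbf{Main obstacle.} For $n=8$, ruling out four parts is the main difficulty. I would first try a counting argument. With four parts, each representation has the form $\{0,a,b,c\}$ with $1\le a\le b\le c\le 4$, and $a=1$ for every boundary vertex. Since $n=8$ is small, I would then enumerate partitions up to rotation and reflection by their run structure and show that a collision always occurs.
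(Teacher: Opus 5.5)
\textbf{There is a genuine gap in your upper bound for $n\ge 9$.} This is the only part of the theorem the paper establishes by an explicit construction. For $n\le 8$, including the impossibility of four parts on $C_8$, the paper just reports a computer search. Your plan to enumerate partitions there is therefore the same kind of argument. As written, though, your remarks about runs and reflection symmetry are heuristics rather than exclusions, and the five-part partition of $C_8$ is promised but not given.

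Your one concrete candidate, $S_2=\{v_1,v_2\}$, $S_3=\{v_4\}$, $S_4=\{v_5\}$ with $S_1$ the rest, fails for every $n$. The vertex $v_3\in S_1$ is at distances $1,1,2$ from $S_2,S_3,S_4$. The vertex $v_4\in S_3$ is at distances $1,1,2$ from $S_1,S_4,S_2$. So both have representation $\{0,1,1,2\}$. The collision is local to the small parts, so ``adjusting the gaps'' for $n=9,10$ would not repair it. Falling back on ``a similar small asymmetric pattern'' leaves the actual content of the upper bound unspecified.

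\textbf{What is needed is one pattern that works for all $n\ge 9$, with a verification.} The paper's pattern, indexing $C_n=v_1\cdots v_n$, is $S_2=\{v_{n-4}\}$, $S_3=\{v_{n-1}\}$, $S_4=\{v_n\}$, with $S_1$ everything else. In words: a singleton, two vertices of the big part, then two adjacent singletons. For $n\ge 8$ the vertices $v_{n-4},\dots,v_n$ receive $\{0,1,3,4\}$, $\{0,1,2,3\}$, $\{0,1,2,2\}$, $\{0,1,1,3\}$, $\{0,1,1,4\}$.

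For $v_i$ with $1\le i\le n-5$, put $a=i$ and $b=n-4-i$. The distances to $S_4,S_3,S_2$ are $\min(a,b+4)$, $\min(a+1,b+3)$, $\min(b,a+4)$. So the representation is:
\begin{itemize}
\item $\{0,a,a+1,a+4\}$ if $b\ge a+4$;
\item $\{0,b,b+3,b+4\}$ if $a\ge b+4$;
\item $\{0,b,b+3,b+3\}$ if $a=b+3$;
\item $\{0,a,a+1,b\}$ if $a-2\le b\le a+3$.
\end{itemize}

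These are distinct from each other and from the five special vertices, for the following reasons.
\begin{itemize}
\item The gaps between consecutive nonzero entries are $(1,3)$, $(3,1)$, $(3,0)$ for the first three families. No other vertex has these patterns, and within each of these families the smallest nonzero entry pins down the vertex.
\item In the last family the gap pattern determines $b-a$, hence the vertex, since $a+b$ is fixed. The one exception is $(1,1)$, which arises from both $b-a=2$ and $b-a=-1$. These cannot both occur because $b-a\equiv n \pmod 2$.
\item A last-family representation equals one of the five special ones only for $(a,b)\in\{(1,2),(2,1),(1,3),(3,1)\}$, that is, only when $n\le 8$.
\end{itemize}

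Supplying this, or an equivalent verified pattern, closes the gap. It is in fact more detailed than the paper, which argues via an entry $\ge 5$ for large $n$ and checks $n=9,10,11$ by hand.
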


\begin{proof}
Let $C_n = v_1, v_2, \ldots, v_n$ and let $\Pi = \{S_1, S_2, S_3, S_4\}$ be the partition of $V(C_n)$ where
\[
S_1 = \{v_1, \ldots, v_{n-5}, v_{n-3}, v_{n-2}\}, \quad
S_2 = \{v_{n-4}\}, \quad
S_3 = \{v_{n-1}\}, \quad
S_4 = \{v_n\}.
\]

For vertices $v_i$ in $S_1 = \{v_1, \ldots, v_{n-5}, v_{n-3}, v_{n-2}\}$, the representation is:
\[
\begin{aligned}
r_{mp}(v_i \mid \Pi)
= \{\, &0,\ \min(|i-(n-4)|,\, n-|i-(n-4)|),\\
      &\min(|i-(n-1)|,\, n-|i-(n-1)|),\\
      &\min(|i-n|,\, n-|i-n|) \,\}
\end{aligned}
\]
for $1 \le i \le n-5$.

Where \(0\) corresponds to the set containing the vertex itself. The second term gives the distance to $S_2$, while the remaining terms are the distances to sets \(S_3\) and \(S_4\).

The representation for $v_{n-3}$ is \[
r_{mp}(v_{n-3} \mid \Pi) = \{0,1,2,3\}.
\] 
The representation at $v_{n-2}$ is \[
r_{mp}(v_{n-2} \mid \Pi) = \{0,1,2,2\}.
\] 

For cycle with diameter $\ge 6$, the representations of the vertices in $S_1$, with $1 \le i \le n-5$, each contain at least one distance $\ge 5$. And it is easy to verify that the representation for vertices in $S_1$ is unique. On the other hand, the representations of the vertices in $S_2$,$S_3$ and $S_4$ contain distances with values at most $4$. Therefore, there can be no overlap between the vertices representations of $S_1$ and those of the other partitions.

For cycles with smaller diameter, such as $C_9$, $C_{10}$, and $C_{11}$, we have verified that there is no overlap in the vertex representations manually. 

The representations for the vertices in $S_2$, $S_3$ and $S_4$ are listed below

\[r_{mp}(v_{n-4} \mid \Pi) = \{0,1,3,4\}.\]
\[r_{mp}(v_{n-1} \mid \Pi) = \{0,1,1,3\}.\]
\[r_{mp}(v_{n} \mid \Pi) = \{0,1,1,4\}.\]

It is straightforward to see the multiset partition representations for all vertices will be different. Hence, $mpd(C_n)= 4 ,\forall n \ge 9$.

For the case when $n <9$, we have verified by computer search.
\end{proof}

\begin{figure}[h!]
    \centering
    \includegraphics[width=0.40\textwidth]{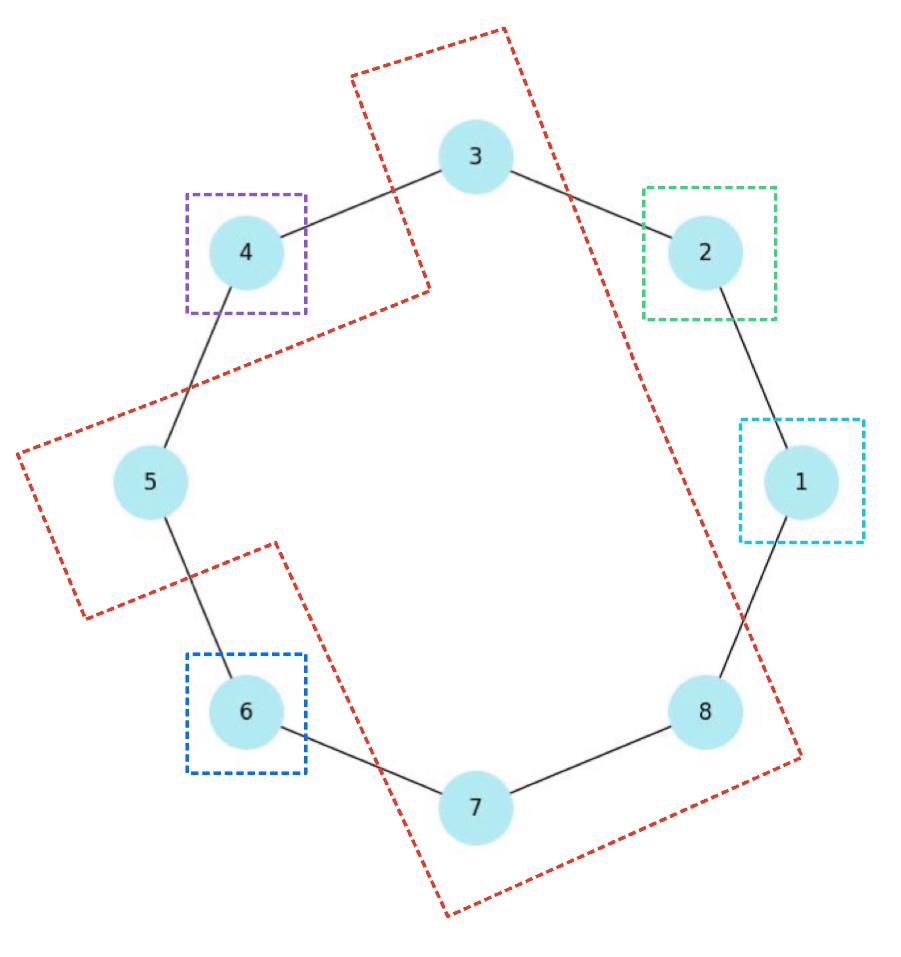}
    \caption{$C_8$ has $\operatorname{mpd}=5$, with partitions illustrated in the figure. Note that $C_8$ cannot be resolved with $\operatorname{mpd}=4$ since the multiset representations of some vertices would contain duplicates.}
    \label{fig:C8withmpd=5}
\end{figure}
\FloatBarrier

\begin{table}[h!]
\centering
\scriptsize
\begin{tabular}{ll}
\toprule
\textbf{Vertex} & 
\textbf{Multiset partition representation}\\
\midrule
1 & $\{0, 1, 1, 3, 3\}$ \\
2 & $\{0, 1, 1, 2, 4\}$ \\
3 & $\{0, 1, 1, 2, 3\}$ \\
4 & $\{0, 1, 2, 2, 3\}$ \\
5 & $\{0, 1, 1, 3, 4\}$ \\
6 & $\{0, 1, 2, 3, 4\}$ \\
7 & $\{0, 1, 2, 3, 3\}$ \\
8 & $\{0, 1, 2, 2, 4\}$ \\
\bottomrule
\end{tabular}
\caption{Multiset partition representations of the vertices for $C_8$.}
\end{table}

\begin{figure}[h!]
    \centering
    \includegraphics[width=0.40\textwidth]{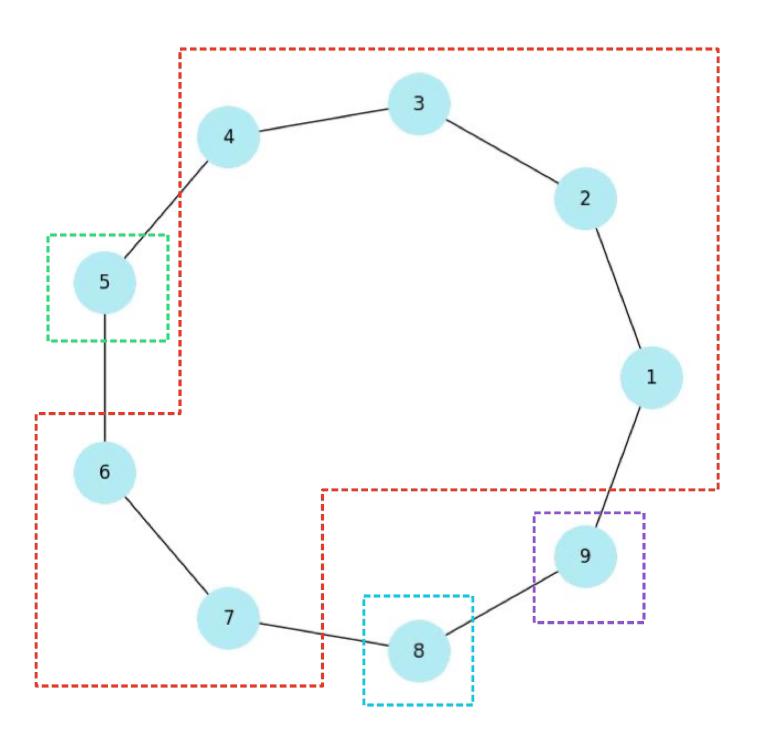}
    \caption{$C_9$ has $\operatorname{mpd}=4$, with partitions illustrated in the figure.}
    \label{fig:C9withmpd=4}
\end{figure}
\FloatBarrier

\begin{table}[h!]
\centering
\scriptsize
\begin{tabular}{ll}
\toprule
\textbf{Vertex} & 
\textbf{Multiset partition representation}\\
\midrule
1 & $\{0, 1, 2, 4\}$ \\
2 & $\{0, 2, 3, 3\}$ \\
3 & $\{0, 2, 3, 4\}$ \\
4 & $\{0, 1, 4, 4\}$ \\
5 & $\{0, 1, 3, 4\}$ \\
6 & $\{0, 1, 2, 3\}$ \\
7 & $\{0, 1, 2, 2\}$ \\
8 & $\{0, 1, 1, 3\}$ \\
9 & $\{0, 1, 1, 4\}$ \\
\bottomrule
\end{tabular}
\caption{Multiset partition representations of the vertices for $C_9$.}
\end{table}

For a general cycle graph $C_n$ with $n\ge 9$, see Figure~\ref{fig:GeneralCyclewithmpd=4} 

\begin{figure}[h!]
    \centering
\includegraphics[width=0.6\textwidth]{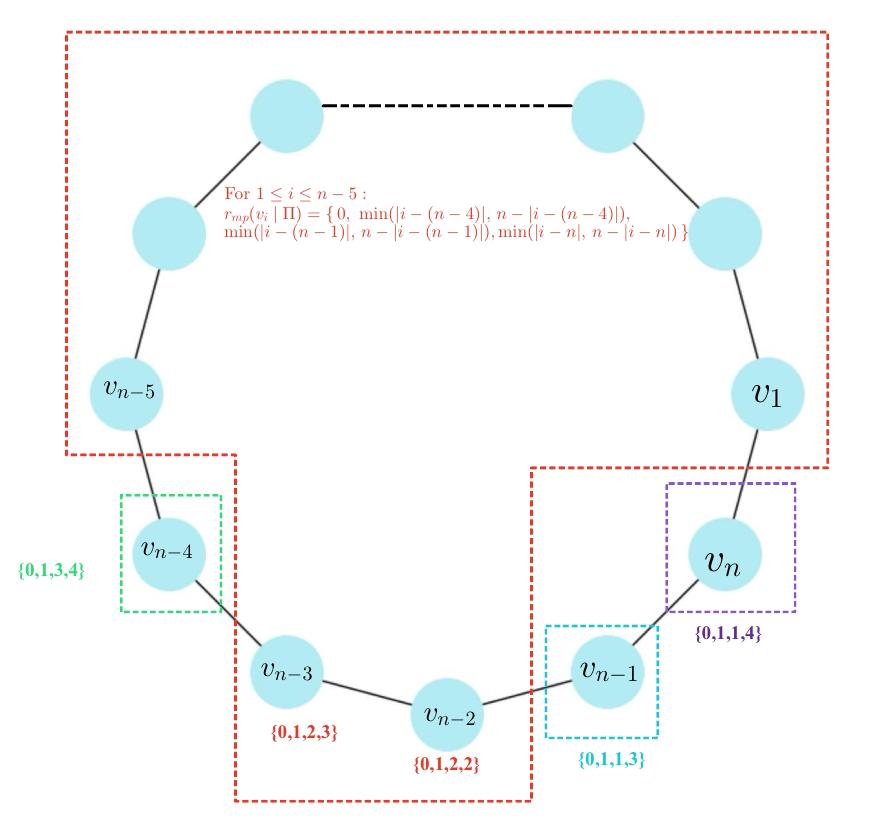}
    \caption{For the cycle graph $C_{n}$ with $n\ge 9$, the partition is illustrated showing that $\operatorname{mpd}(C_{n}),\ n\ge9 = 4$.}
    \label{fig:GeneralCyclewithmpd=4}
\end{figure}
\FloatBarrier

For other graphs, we have..
\begin{theorem}
Let $G_{n,m}$ be a grid graph with $n \ge 3$ and $m \ge 5$. Then
\[
\mathrm{mpd}(G_{n,m})=4.
\]
\end{theorem}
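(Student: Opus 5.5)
By the first theorem of Section~2, every connected graph with at least two vertices has $\mathrm{mpd}(G)\ge 4$. So the lower bound $\mathrm{mpd}(G_{n,m})\ge 4$ is free, and the whole proof reduces to exhibiting a resolving multiset partition of $G_{n,m}$ with four parts whenever $n\ge 3$ and $m\ge 5$. I would follow the pattern of the path and cycle proofs: three small parts placed near the boundary and one large part $S_1$ containing everything else. I write the vertices as $(i,j)$ with $1\le i\le n$ and $1\le j\le m$, and use $d((i,j),(i',j'))=|i-i'|+|j-j'|$.

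\textbf{Choice of partition.} The first candidate I would try is three singleton corners,
\[
S_2=\{(1,1)\},\quad S_3=\{(1,m)\},\quad S_4=\{(n,1)\},\quad S_1=V\setminus(S_2\cup S_3\cup S_4).
\]
For $(i,j)\in S_1$ the representation is $\{0,a,b,c\}$ with
\[
a=i+j-2,\quad b=i-1+m-j,\quad c=n-i+j-1.
\]
These three linear forms determine $(i,j)$ once we know which value is which. The difficulty is that the multiset forgets the labels. So the core of the argument is an \emph{ordering lemma}: on suitable regions of the grid, the relative order of $a,b,c$ is forced, or the labelling can be recovered from sums and parities. Two facts would drive this. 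First, $a+b=2i+m-3$ and $a+c=n+2j-3$. Second, because the grid is bipartite, $a\equiv c$ always, while $a\equiv b$ exactly when $m$ is odd.

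\textbf{Key steps.} First I would show that, given an unordered triple arising from some vertex, it is either consistent with a unique assignment to $(a,b,c)$, or the competing assignments yield the same vertex. This splits into cases according to which of the six orderings of $a,b,c$ is possible. Second, I would compute the representations of the three singleton vertices explicitly and check that they do not occur among the vertices of $S_1$. These singletons are the only vertices whose representation has a single $0$ coming from a one-vertex part, but that alone does not distinguish them, so the check must be done on the multisets themselves. Third, I would handle the boundary sizes $n\in\{3,4\}$ or $m\in\{5,6\}$, and small cases in general, by direct listing, as the paper did for $C_9$ through $C_{11}$.

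\textbf{Main obstacle.} The hard step is the collision analysis among vertices of $S_1$. Reflection-type coincidences are the danger. For example, swapping the roles of $a$ and $b$ corresponds to $j\mapsto m+1-j$ while keeping $i$ fixed, and that swap must be ruled out by $c$ changing. I expect genuine collisions for some $(n,m)$.

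If collisions occur, my fallback is to break the symmetry by enlarging one small part. I would replace $S_3=\{(1,m)\}$ by $\{(1,m-1),(1,m)\}$, or use a short path segment in the spirit of the part $S_2=\{v_{n-3},v_{n-2}\}$ from the path proof. Distances to this part then follow a piecewise-linear formula whose slope differs from those of $a$ and $c$, which should make the labels recoverable.

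The hypothesis $m\ge 5$ suggests that the intended construction places the small parts along a row of length at least $5$, copying the path partition of $P_m$ from the earlier theorem on paths. A natural version is to take, in row $1$,
\[
S_2=\{(1,m-3),(1,m-2)\},\quad S_3=\{(1,m-1)\},\quad S_4=\{(1,m)\},
\]
and to put everything else, including all other rows, into $S_1$. For this version, the distances from $(i,j)$ are the path distances from $(1,j)$ to these parts, each increased by $i-1$. The $P_m$ proof already separates distinct columns. The shift by $i-1$ separates distinct rows, with one exception: every vertex in rows $i\ge 2$ lies in $S_1$, so its representation contains a $0$ plus three shifted values, and these must be compared against representations from other rows. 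This is the comparison I would carry out carefully, and I regard it as the main obstacle.
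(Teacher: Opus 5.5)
\textbf{Verdict: genuine gap.} Your lower bound via the first theorem of Section~2 is fine. The upper bound, however, is never established. You propose three partitions and verify none, and the two you spell out both fail.

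\emph{The row-$1$ copy of the $P_m$ partition.} You single this out as the intended construction, but it is not resolving for any $m\ge 5$, $n\ge 2$: $(1,m-3)$, $(2,m-2)$ and $(2,m)$ all have representation $\{0,1,2,3\}$. The failure is structural. When every small part lies to the right of $(i,j)$, the distance to $(1,k)$ is $(k-1)+(i-j)$, a function of $i-j$ alone. So for $m\ge 6$, the vertices $(i,j)$ and $(i+1,j+1)$ with $j+1\le m-4$ have identical distance vectors; for example, $(1,m-5)$ and $(2,m-4)$ both give $\{0,2,4,5\}$. Your sentence ``the shift by $i-1$ separates distinct rows'' is exactly what breaks: a row shift is cancelled by a column shift.

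\emph{The three-corner partition.} This fails whenever $n\equiv m\pmod 2$. The map $(i,j)\mapsto\bigl(j+\tfrac{n-m}{2},\,i-\tfrac{n-m}{2}\bigr)$ preserves $a$ and swaps $b$ and $c$. For $n=m$ it is the transpose, and $(1,2)$, $(2,1)$ both get $\{0,1,m-2,m\}$, so already $G_{5,5}$ is not resolved. It also fails for $n=m-1$, where $(1,1)$ and $(1,2)$ both get $\{0,1,m-2,m-1\}$. Your parity remark is also off: $a-c=2i-n-1$, so $a\equiv c$ only when $n$ is odd.

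\emph{The fallback.} The enlarged-part variant is left entirely unchecked. So no four-part resolving partition has actually been produced.

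\textbf{The missing idea.} Place small parts at \emph{both} ends of a row, so that one distance depends on $i+j$ and another on $i-j$. Also make two of the distances differ by exactly $1$, so the labels can be recovered from the multiset. The paper takes $S_1=\{(1,1)\}$, $S_2=\{(1,2)\}$, $S_3=\{(1,m)\}$ and puts everything else in $S_4$.

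For $(x,y)\in S_4$, the nonzero entries are $\{p,p+1,q\}$, where:
\begin{itemize}
\item $p=x+y-3$ and $q=x-1+m-y$ if $y\ge 2$;
\item $p=x-1$ and $q=x+m-2$ if $y=1$.
\end{itemize}
The ordered pair $(p,q)$ determines $(x,y)$ through $p+q$ and $q-p$, where $q-p$ equals $m+2-2y$ or $m-1$ respectively. The only multiset admitting two such decompositions is $\{r,r+1,r+2\}$, read as $(p,q)=(r,r+2)$ or $(r+1,r)$. That would force $2y=m$ for one vertex and $2y'=m+3$ for the other, which is impossible by parity.

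The singletons give $\{0,1,1,m-1\}$, $\{0,1,1,m-2\}$ and $\{0,1,m-2,m-1\}$. A direct check shows none of these arises from $S_4$ when $m\ge 5$.

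The paper only asserts that this ``can be verified''. With this partition, your ordering-lemma approach goes through in a few lines.
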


\begin{proof}
Let $G_{n,m}$ be a grid graph with $n \ge 3$ rows and $m \ge 5$ columns.

We label each vertex by an ordered pair 
\[
v=(x,y), \quad x=1,\dots,n, \quad \text{and} \quad  \ y=1,\dots,m. 
\]

Let 
$\Pi=\{S_1, S_2, S_3, S_4\}$
be the partition of $V(G_{n,m})$ where 
\[
\begin{aligned}
S_1 &= \{(1,1)\}, & S_2= \{(1,2)\}, \\
S_3 &= \{(1,m)\}, \\
S_4 &= \{V(G_{n,m}) \setminus (S_1 \cup S_2 \cup S_3)\}. \\
\end{aligned}
\]

For any vertex $v=(x,y) \in V(G_{n,m})$, its multiset partition representation with respect to the above partition is given by 

\[
r_{mp}(v \mid \Pi)=\{d_1,d_2,d_3,d_4\},\] where $d_i$ is the distance from $v$ to $S_i$.

\[
\begin{aligned}
d_1 &= |x-1|+|y-1|,\\
d_2 &= |x-1|+|y-2|,\\
d_3 &= |x-1|+|y-m|,\\
d_4 &= 
\begin{cases}
0, & (x,y)\in S_4,\\
1, & (x,y)\in\{(1,1),(1,2),(1,m)\}.
\end{cases}
\end{aligned}
\]

It can be verified that for any two distinct vertices $u,v \in V(G_{n,m})$, their multiset partition representations are different due to the positions of three sets $S_1, S_2, S_3$ along the grid.
Hence, $\Pi$ is a resolving multiset partition for $G_{n,m}$, and $\mathrm{mpd}(G_{n,m}) = 4.$
\end{proof}
\begin{figure}[h!]
    \centering
    \includegraphics[width=0.6\textwidth]{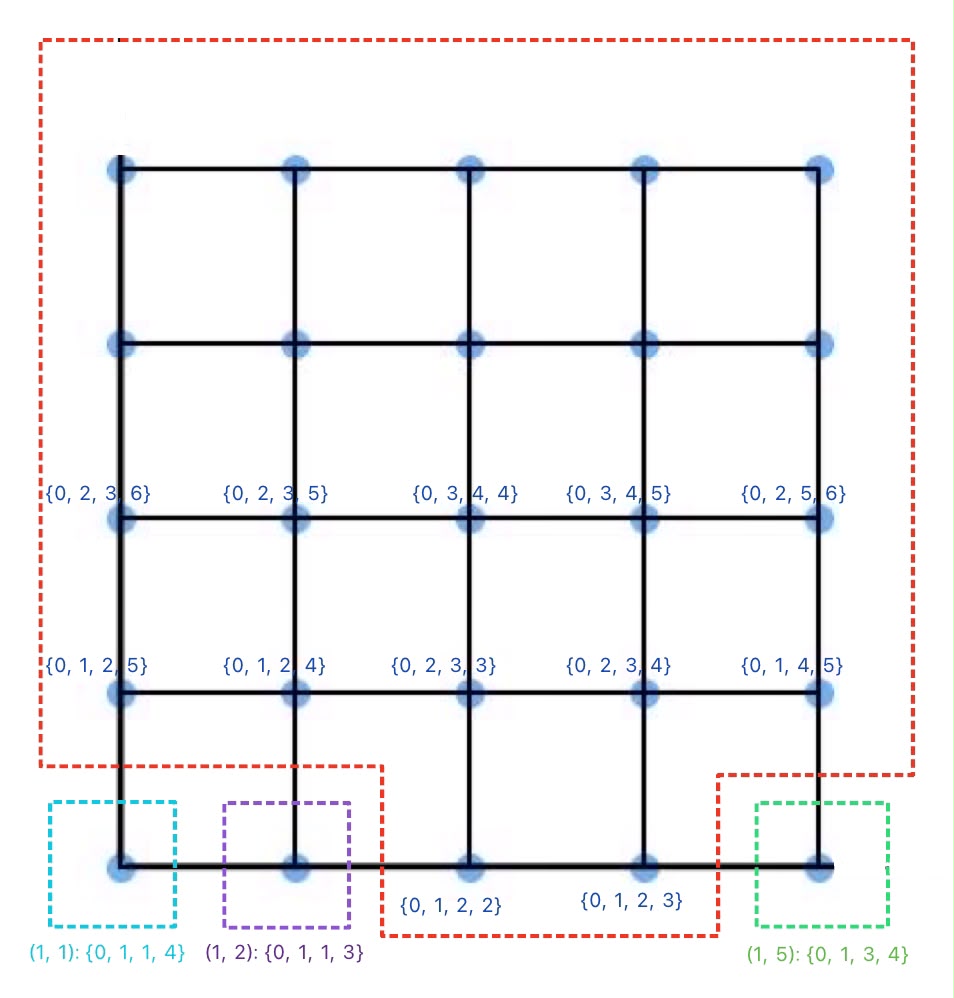}
    \caption{For the grid graph $G_{5,5}$, the partition 
    $\Pi=\{\{(1,1)\}, \{(1,2)\}, \{(1,5)\}, \text{\{all remaining vertices\}}\}$ 
    is illustrated in the figure, and $\operatorname{mpd}(G_{5,5}) = 4$.}
    \label{fig:Gridwithmpd=4}
\end{figure}
\FloatBarrier

\begin{theorem}
Let $L_{m}$ be a ladder graph with $2$  rows and $m \ge 5$ columns. Then
\[
\mathrm{mpd}(L_{m}) = 4.
\]
\end{theorem}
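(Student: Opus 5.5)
By the lower bound theorem at the start of Section 2, every connected graph on at least two vertices has $\mathrm{mpd}\ge 4$, so $\mathrm{mpd}(L_m)\ge 4$. It therefore suffices to exhibit a resolving multiset partition with four parts. I would copy the grid construction. Label the vertices $(x,y)$ with $x\in\{1,2\}$ and $1\le y\le m$, and take
\[
S_1=\{(1,1)\},\quad S_2=\{(1,2)\},\quad S_3=\{(1,m)\},\quad S_4=V(L_m)\setminus(S_1\cup S_2\cup S_3).
\]
For a vertex in $S_4$, write $a=x-1$; then $d_1=a+y-1$, $d_2=a+|y-2|$, $d_3=a+m-y$ and $d_4=0$.

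First I would list the representations explicitly.
\begin{itemize}
\item Row 1, $3\le y\le m-1$: $\{0,\,y-2,\,y-1,\,m-y\}$.
\item Row 2, $2\le y\le m$: $\{0,\,y-1,\,y,\,m-y+1\}$.
\item The vertex $(2,1)$: $\{0,1,2,m\}$.
\item The three singletons: $(1,1)\mapsto\{0,1,1,m-1\}$, $(1,2)\mapsto\{0,1,1,m-2\}$ and $(1,m)\mapsto\{0,1,m-1,m-2\}$.
\end{itemize}
The key device is the sum of the entries of each multiset, which is a cheap invariant.
\begin{itemize}
\item Row 1 vertices have sum $m+y-3$, and row 2 vertices with $y\ge2$ have sum $m+y$. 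Hence vertices within a single row are pairwise distinguished.
\item A row 1 vertex and a row 2 vertex can collide only if they are at $(1,y)$ and $(2,y-3)$.
\item $(2,1)$ has sum $m+3$, so it can only clash with $(1,6)$ or $(2,3)$. In each case a direct comparison of the entries, using $m\ge5$, gives a difference.
\end{itemize}

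For the pair $(1,y)$ and $(2,y-3)$, the two multisets are $\{y-2,y-1,m-y\}$ and $\{y-4,y-3,m-y+4\}$, after dropping the $0$. Equality would force $y-4\in\{y-2,y-1,m-y\}$, so $m-y=y-4$. Then the second set becomes $\{y-4,y-3,y\}$, which contains $y$, while the first set $\{y-2,y-1,y-4\}$ does not. So no collision occurs. The singletons are handled separately. Each contains the value $1$ coming from $S_4$, and the representations of $(1,1)$ and $(1,2)$ contain two $1$'s. Vertices of $S_4$ have a $1$ only when they are adjacent to a singleton, so only finitely many cases remain, and I would check these by comparing entries.

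The main obstacle I expect is the boundary cases: small $y$, where $|y-2|$ changes form, together with the singleton parts. These must be checked uniformly in $m\ge5$ and not just for $m=5$. I would first verify $m=5$ explicitly as a sanity check; the representations there are all distinct. I would then argue that for larger $m$ the entries involving $m$ only grow, so the finite list of boundary comparisons persists. If some coincidence appears for a particular $m$, the fallback is to move $S_3$ to $(2,m)$ and redo the same sum-based bookkeeping.
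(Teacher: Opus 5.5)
Your proposal is correct and follows the paper's proof. You use the same lower bound from the opening theorem and the same partition $\{(1,1)\},\{(1,2)\},\{(1,m)\}$ plus the remaining vertices, and your row-sum invariant supplies the verification that the paper only asserts ("it can be verified"). The leftover singleton comparisons do go through for every $m\ge5$; the only coincidences, such as $(1,2)$ versus $(1,3)$, occur at $m=4$. So the fallback of moving $S_3$ is never needed.
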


\begin{proof}
Let $L_{m}$ be a ladder graph with $2$ rows and $m \ge 5$ columns. We label each vertex by an ordered pair 
\[
v=(x,y), \quad x=1,2 \quad \text{and} \quad  \ y=1,\dots,m. 
\]

Let 
$\Pi=\{S_1, S_2, S_3, S_4\}$
be the partition of $V(L_{m})$ where 
\[
\begin{aligned}
S_1 &= \{(1,1)\}, & S_2= \{(1,2)\}, \\
S_3 &= \{(1,m)\}, \\
S_4 &= \{V(L_{m}) \setminus (S_1 \cup S_2 \cup S_3\}. \\
\end{aligned}
\]

For any vertex $v=(x,y) \in V(L_{m})$, its multiset partition representation with respect to the above partition is given by 
\[
r_{mp}(v \mid \Pi)=\{d_1,d_2,d_3,d_4\},\] where $d_i$ is the distance from $v$ to $S_i$.

\[
\begin{aligned}
d_1 &=|x-1| +|y-1|, \\
d_2 &=|x-1| +|y-2|, \\
d_3 &= |x-1|+|y-m|,\\
d_4 &= 
\begin{cases}
0, & (x,y)\in S_4,\\
1, & (x,y)\in\{(1,1),(1,2),(1,m)\}.
\end{cases}
\end{aligned}
\]

It can be verified that for any two distinct vertices $u,v \in V(L_{m})$, their multiset partition representations are different due to the positions of three sets $S_1, S_2, S_3$ along the ladder.

Hence, $\Pi$ is a resolving multiset partition for $L_{m}$, and $\mathrm{mpd}(L_{m}) = 4.$
\end{proof}

\begin{figure}[h!]
    \centering
\includegraphics[width=0.55\textwidth]{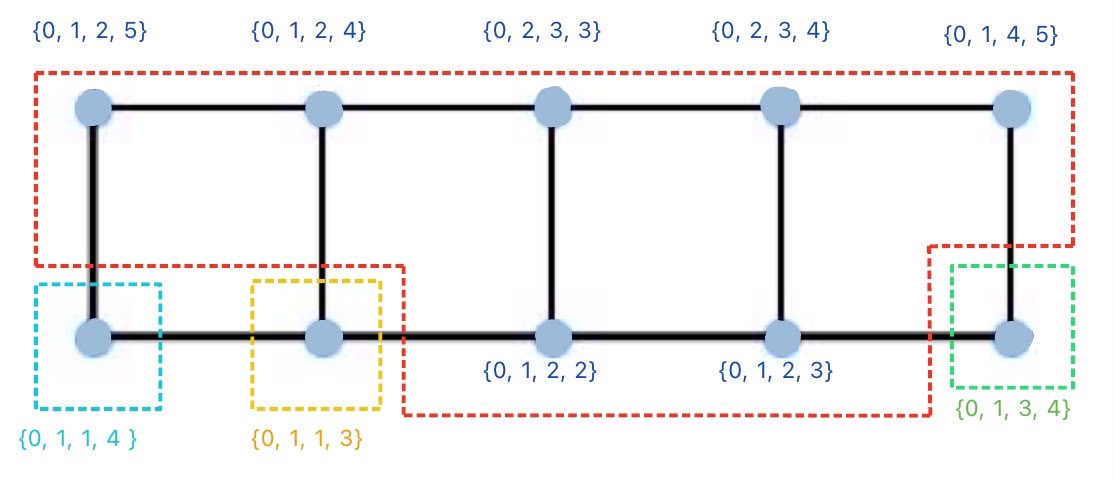}
    \caption{For the ladder graph $L_{5}$, the partition 
    $\Pi=\{\{(1,1)\}, \{(1,2)\}, \{(1,5)\}\},\\ \text{\{all remaining vertices\}}\}$ 
    is illustrated in the figure, and $\operatorname{mpd}(L_{5}) = 4$.}
    \label{fig:Ladderwithmpd=4}
\end{figure}
\FloatBarrier

For a ladder graph $L_m$ with larger $m$, see Figure~\ref{fig:GeneralLadderwithmpd=5} for an example. 

\begin{figure}[h!]
    \centering
\includegraphics[width=0.8\textwidth]{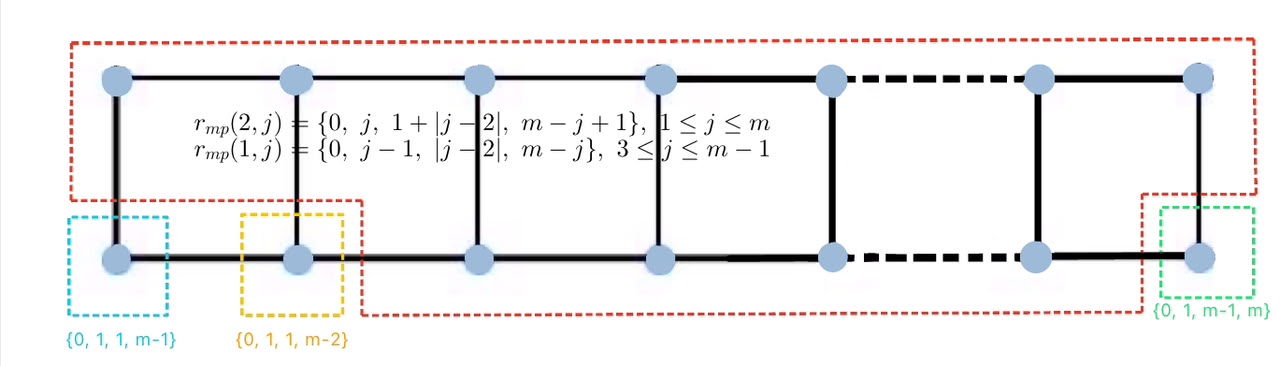}
    \caption{For the ladder graph $L_{m}$, the partition 
    $\Pi=\{\{(1,1)\}, \{(1,2)\}, \{(1,m)\},\text{\{all remaining vertices\}}\}$
    is illustrated together with the multiset partition representations of all vertices.}
    \label{fig:GeneralLadderwithmpd=5}
\end{figure}
\FloatBarrier


For the prism graph $P_m$, we have conducted a computer verification for $m$ up to 70, can confirm that $mpd(P_m)$ is less than or equal to $6$. For $m=6$ and $7$, the partition is illustrated in Fig.~\ref{fig:Prismwithmpd=6} and Fig.~\ref{fig:P7} . And for $m \ge 8$, the partition is discussed in the following theorem.


\begin{figure}[h!]
    \centering
    \includegraphics[width=0.4\textwidth]{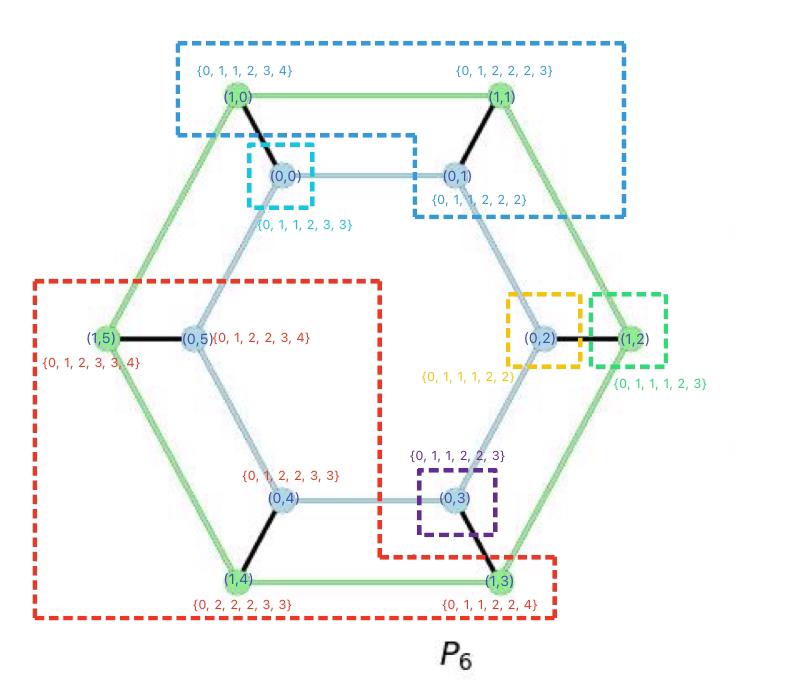}
    \caption{For the prism graph $P_{6}$, the partition 
    $\Pi=\{\{(0,0)\}, \{(0,2)\}, \{(0,3)\},\{(1,2)\}, \{(0,1),(1,0),(1,1)\},\\ \text{\{all remaining vertices\}}\}$ 
    and $\operatorname{mpd}(P_{6}) =6$.}
    \label{fig:Prismwithmpd=6}
\end{figure}
\FloatBarrier

 \begin{figure}[h!]
    \centering
\includegraphics[width=0.5\textwidth]{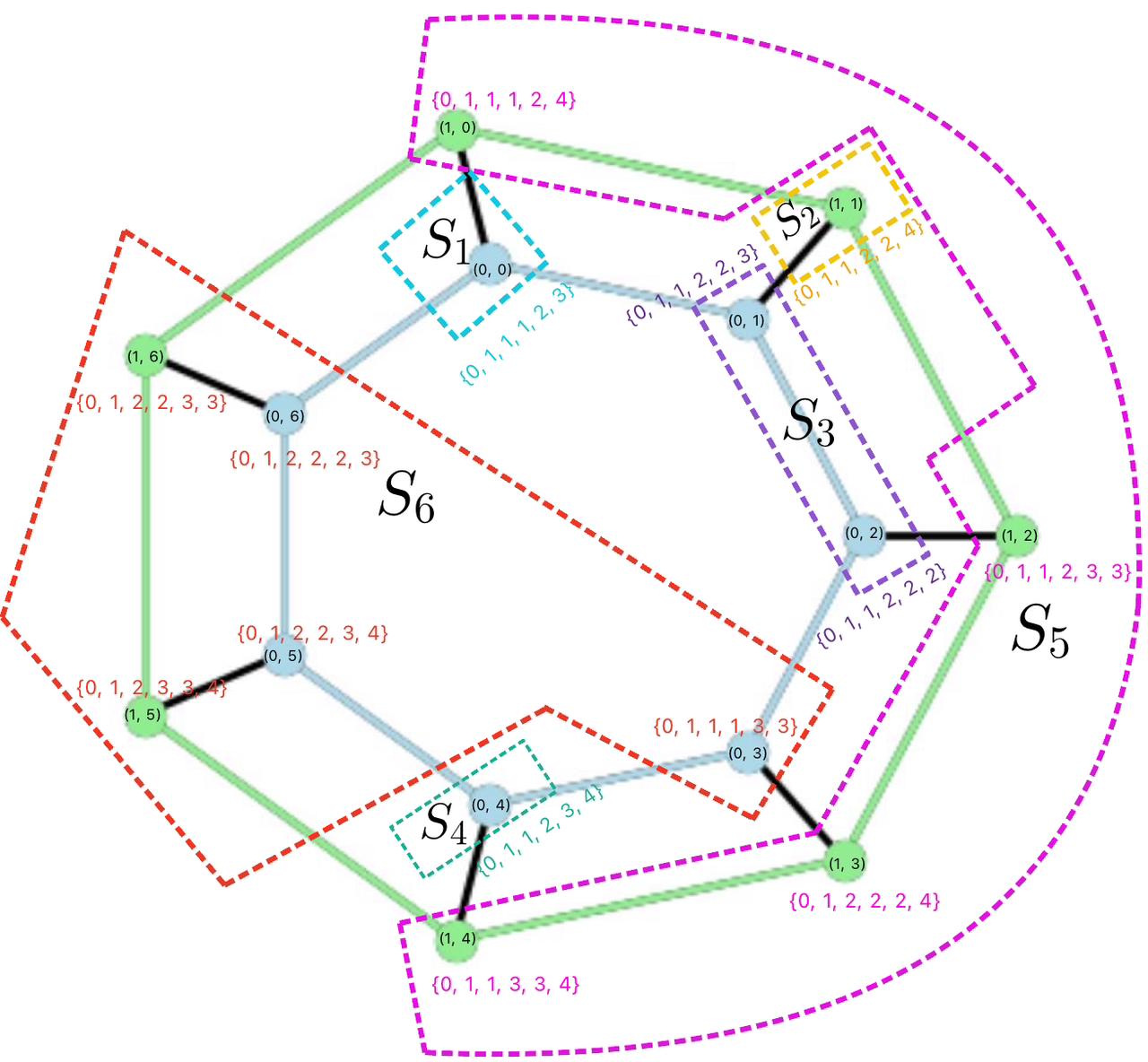}
    \caption{For $P_{7}$, the partition $\Pi=\{S_1, S_2, S_3, S_4, S_5, S_6\}$ is illustrated, showing that $\operatorname{mpd}(P_{7}) = 6$. }
\label{fig:P7}
\end{figure}
\FloatBarrier


\begin{theorem}
Let $P_{m}$ be a prism graph with two $m$-vertex cycles ($m \ge 8$) connected by a layer of vertical edges between the corresponding vertices. Then
\[
\mathrm{mpd}(P_{m}) \leq 6.
\]       
    
\end{theorem}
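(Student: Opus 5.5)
The plan is to exhibit an explicit partition with six parts, modelled on the partition shown for $P_6$ in Fig.~\ref{fig:Prismwithmpd=6}, and then check that all multiset partition representations are distinct. Label the vertices $(a,i)$ with $a\in\{0,1\}$ the cycle (layer) and $i\in\mathbb{Z}_m$ the position. Then
\[
d\big((a,i),(b,j)\big)=|a-b|+\delta(i,j),\qquad \delta(i,j)=\min(|i-j|,\,m-|i-j|).
\]
I would take
\[
S_1=\{(0,0)\},\ S_2=\{(0,2)\},\ S_3=\{(0,3)\},\ S_4=\{(1,2)\},\ S_5=\{(0,1),(1,0),(1,1)\},
\]
and let $S_6$ consist of all remaining vertices. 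Distances to singleton parts are given by the formula above. The distance to $S_5$ is the minimum over its three vertices, and the distance to $S_6$ is $0$ or $1$.

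\textbf{Step 1: the small parts.} Compute the representations of the seven vertices in $S_1\cup\dots\cup S_5$ directly. Each of them contains exactly one $0$, since $S_6$ is adjacent to every vertex in the small parts once $m\ge 8$. All their entries are bounded by roughly $3$, independently of $m$. I would list these seven multisets and check that they are pairwise distinct.

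\textbf{Step 2: the large part.} For $v=(a,i)\in S_6$, write $i$ as a signed offset $s\in(-m/2,m/2]$ measured from the cluster of positions $0,\dots,3$. The four singleton distances are then
\[
\delta(s,0)+a,\quad \delta(s,2)+a,\quad \delta(s,3)+a,\quad \delta(s,2)+1-a,
\]
and the $S_5$ distance is about $\min(\delta(s,0),\delta(s,1))$ plus a layer correction. I would split into cases: $s$ on the right of the cluster ($s\ge 4$), $s$ on the left ($s<0$), and the vertices inside the cluster window. On each side the multiset is determined by its minimum nonzero entry together with the fixed gap pattern. That pattern differs between the two sides because the singleton positions $0,2,3$ are asymmetric, and the layer index $a$ shifts exactly three of the four singleton entries in one direction and the fourth in the other. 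These features should let me recover $(a,s)$ from the multiset. Near the antipode $s\approx m/2$, the wrap-around of $\delta$ merges the two sides; that region needs a separate small check.

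\textbf{Step 3: cross comparisons.} Vertices of $S_6$ far from the cluster have a largest entry of at least about $4$, so they cannot collide with the representations found in Step 1. The remaining finitely many near-cluster vertices are compared by direct inspection.

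\textbf{Main obstacle.} The hardest step is Step 2: separating vertices on the left and right of the cluster, and on the two layers, near the antipodal point. There reflection and wrap-around make the gap patterns coincide for some small $m$ (and parity of $m$ matters). If the proposed partition fails there, I would first try moving $S_3$ from $(0,3)$ to $(0,4)$, or putting $(1,3)$ into $S_5$, to destroy the offending coincidence. The argument would then be completed uniformly for $m\ge 8$, with the smallest few values of $m$ checked directly (consistent with the computer verification mentioned before the theorem).
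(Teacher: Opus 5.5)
\textbf{There is a genuine gap: the partition you commit to is not resolving for any odd $m\ge 9$.} The partition is the one from the $P_6$ figure, and the failure is systematic, so Step 2 cannot be completed by finitely many direct checks.

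Write $m=2p+1$. The vertex $(0,p+2)$ is exactly antipodal to the midpoint of positions $0,\dots,3$, so $\delta(p+2,0)=\delta(p+2,3)=p-1$ and $\delta(p+2,1)=\delta(p+2,2)=p$. Its distances to $S_1,\dots,S_6$ are therefore $p-1,\,p,\,p-1,\,p+1,\,p,\,0$. For $(1,p+1)$ you get $\delta(p+1,0)=\delta(p+1,1)=p$, $\delta(p+1,2)=p-1$ and $\delta(p+1,3)=p-2$. Its distances are $p+1,\,p,\,p-1,\,p-1,\,p,\,0$, where the $S_5$-distance is $\min(p+1,p,p)=p$.

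Both representations equal $\{0,p-1,p-1,p,p,p+1\}$, and both vertices lie in $S_6$. Concretely:
for $m=9$, $(0,6)$ and $(1,5)$ both give $\{0,3,3,4,4,5\}$;
for $m=11$, $(0,7)$ and $(1,6)$ both give $\{0,4,4,5,5,6\}$.

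Your generic analysis away from the antipode is correct. The four profiles $x,x+1,x+2,x+2,x+3$; $x,x,x+1,x+1,x+3$; $x,x+1,x+2,x+3,x+3$; $x,x+1,x+2,x+3,x+4$ are pairwise distinct, and the partition does work for every even $m\ge 8$. But the wrap-around coincidence you flagged is an infinite family in the odd case, not a small-$m$ artefact.

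Your proposed repairs (moving $S_3$ to $(0,4)$, or adding $(1,3)$ to $S_5$) are unverified. Any modified partition requires Steps 1--3 to be redone in full, including the antipodal window for both parities. As written, the proposal does not establish $\mathrm{mpd}(P_m)\le 6$ for odd $m$.

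The paper uses a different, less symmetric partition: $S_1=\{(0,0)\}$, $S_2=\{(1,1)\}$, $S_3=\{(0,1),(0,2)\}$, $S_4=\{(0,4)\}$, $S_5=\{(1,0),(1,2),(1,3),(1,4)\}$. Your Step 2 template goes through for it: four generic gap profiles by side and layer, plus a window near the antipode of three positions for even $m$ or four for odd $m$.

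There are also two smaller inaccuracies in Step 1:
\begin{enumerate}
\item The vertices $(0,1)$, $(0,2)$ and $(1,1)$ have no neighbour in $S_6$; their distance to $S_6$ is $2$.
\item The vertex $(1,0)$ is at distance $4$ from $S_3$, so its representation is $\{0,1,1,2,3,4\}$. The ``largest entry at least $4$'' separation in Step 3 therefore needs a sharper threshold or a direct comparison.
\end{enumerate}
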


\begin{proof}

Let $P_{m}$ be a prism graph with $m \ge 8$. We label each vertex by an ordered pair 
\[
v=(i,j), \quad i\in \{0,1\}, \quad \text{and} \quad  \ j\in \{0,...,m-1\}. 
\]

Let 
$\Pi=\{S_1, S_2, S_3, S_4, S_5, S_6\}$
be the partition of $V(P_{m})$ where 
\[
\begin{aligned}
S_1 &= \{(0,0)\}, & S_2= \{(1,1)\}, \\
S_3 &= \{(0,1), (0,2)\}, & S_4= \{(0,4)\}, \\
S_5 &=\{(1,0),(1,2),(1,3),(1,4)\}\\
S_6 &= \{V(P_{m}) \setminus (S_1 \cup S_2 \cup S_3 \cup S_4\cup S_5)\}. \\
\end{aligned}
\]

For any vertex $v=(i,j) \in V(P_{m})$, its multiset partition representation with respect to the above partition is given by 

\[
r_{mp}(v \mid \Pi)=\{d_1, d_2, d_3, d_4, d_5, d_6\},\] where $d_i$ is the distance from $v$ to $S_i$.

\[
\begin{aligned}
d_1 &=i+min \ (|j|, m-|j|), \\
d_2 &=|i-1|+min \ (|j-1|, m-|j-1|), \\
d_3 &=i+ min \ [ \ min \ (|j-1|, m-|j-1|),\ min \ (|j-2|, m-|j-2|)], \\
d_4 &=i+min \ (|j-4|, m-|j-4|), \\
d_5 &=|i-1|+ \ min \ [ \ min \ (|j|, m-|j|), \ min \ (|j-2|, m-|j-2|), \\ &\quad   \ min \ (|j-3|, m-|j-3|),  \ min \ (|j-4|, m-|j-4|)\ ], \\
d_6 &= min_{(x,k) \in S_6} \left (|i-x|+ \ min \ (|j-k|, m-|j-k| \right ).\\
\end{aligned}
\]

For vertices in the parts $S_1, S_2, S_3, S_4, S_5$, the vertices have constant representations and all unique. For vertices in $S_6$, their representations involve larger distances. Moreover, owing to the asymmetric structure of the other five parts, these representations are all pairwise distinct. Hence, $\Pi$ is a resolving multiset partition for $P_{m}$, and $\mathrm{mpd}(P_{m}) \le 6.$
\end{proof}

\begin{figure}[h!]
    \centering
\includegraphics[width=0.4\textwidth]{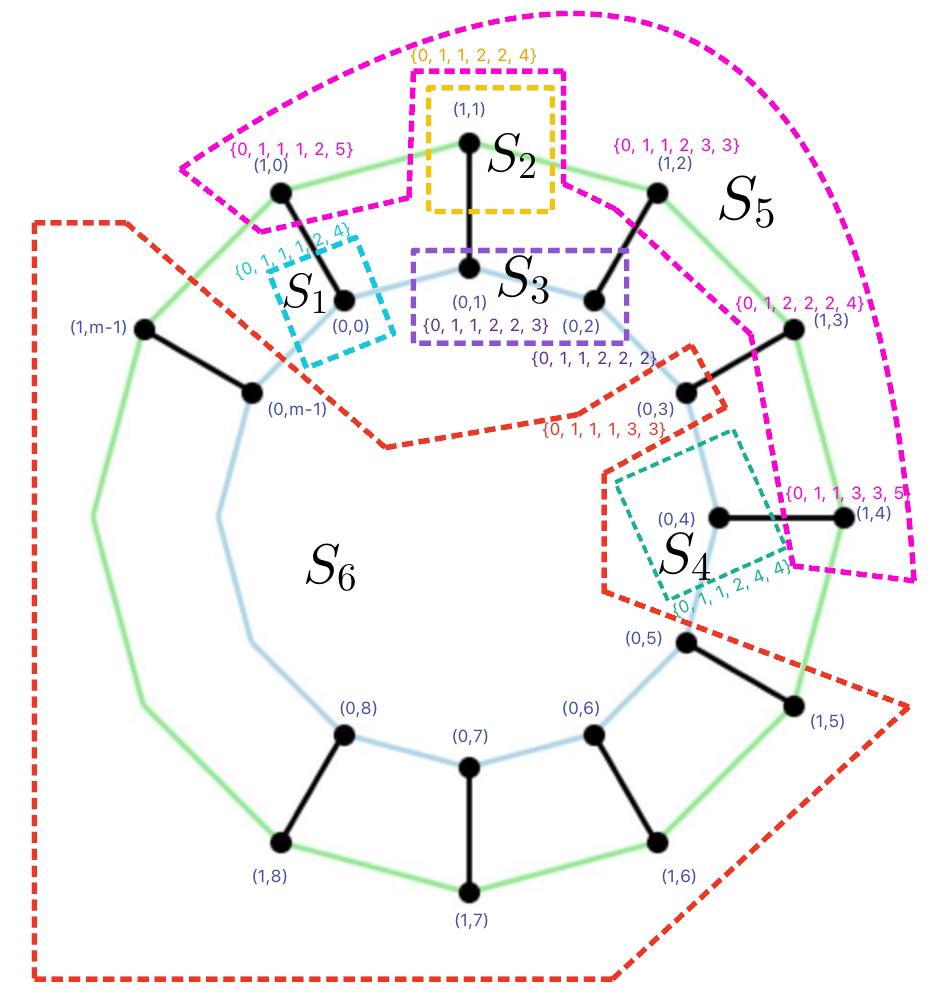}
    \caption{For the prism graph $P_{m}$, with $m \ge 8$, the partition $\Pi=\{S_1, S_2, S_3, S_4, S_5, S_6\}$  is illustrated, showing that $\operatorname{mpd}(P_{m}) \le 6$. The fixed representations of the vertices in $S_1, S_2, S_3, S_4, S_5$ are shown.}
\label{fig:GeneralPrismwithmpd=8}
\end{figure}
\FloatBarrier

From the above results, the differences between the multiset dimension and the multiset partition dimension may be small for some graphs, For example, $mpd(P_n) = {md}(P_n) + 3$, $mpd(C_n) = {md}(C_n) + 1$,  and $mpd(G_{n,m}) = {md}(G_{n,m}) + 1$. While in others the gap can be arbitrarily large such as with Novi's tree, thus, we would like to propose the following problem.

\begin{problem}
How is $\mathrm{mpd}(G)$ bounded in terms of $\mathrm{MD}(G)$ and other related notions of metric-type dimension?
\end{problem}

\section{Graphs with Infinite Multiset Partition Dimension}

It is easy to see the following.

\begin{theorem}
Let $K_n$ be a complete graph. Then
\[
\mathrm{mpd}(K_n) = \infty.
\]          
\end{theorem}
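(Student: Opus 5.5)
In $K_n$ every vertex is at distance exactly $1$ from every other vertex. So for any partition $\Pi=\{S_1,\dots,S_t\}$ of $V(K_n)$, the representation $r_{mp}(v\mid\Pi)$ depends only on which part contains $v$, and barely even on that. My plan is to show that any two distinct vertices of $K_n$ receive the same multiset, so no partition of any size resolves $K_n$. By definition this gives $\mathrm{mpd}(K_n)=\infty$. I will assume $n\ge 2$, as in the first theorem of Section 2; for $n=1$ the single trivial partition resolves the graph vacuously, so that case is excluded from the intended statement.

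\textbf{Step 1: compute the representation.} Fix a partition $\Pi=\{S_1,\dots,S_t\}$ of $V(K_n)$ and a vertex $v\in S_i$. Then $d(v,S_i)=0$. For $j\ne i$, the part $S_j$ is nonempty and does not contain $v$, so every $x\in S_j$ is adjacent to $v$, and hence $d(v,S_j)=1$. Therefore
\[
r_{mp}(v\mid\Pi)=\{0,\underbrace{1,\dots,1}_{t-1}\}
\]
for every vertex $v$, regardless of which part contains it.

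\textbf{Step 2: conclude.} Since $n\ge2$, there are two distinct vertices $u,v$, and by Step 1 they have the identical representation. Hence no partition of $V(K_n)$ is a resolving multiset partition, and $\mathrm{mpd}(K_n)=\infty$.

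One can also reach the conclusion through Lemma~\ref{Lemma2.2partition}. Every pair $u,v$ of vertices in $K_n$ satisfies $d(u,w)=d(v,w)=1$ for all $w\notin\{u,v\}$, so the lemma forces all $n$ vertices into distinct parts, making every part a singleton. Then every vertex has representation $\{0,1,\dots,1\}$, which again gives a collision. The direct computation in Step 1 is shorter, so I would present that one.

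There is essentially no obstacle here. The only points needing care are that the parts of a partition are nonempty, so that each $d(v,S_j)$ for $j\neq i$ is well defined and equals $1$, and the degenerate case $n=1$ noted above.
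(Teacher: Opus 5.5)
Your proof is correct and is essentially the paper's argument: both observe that under any partition into $t$ parts every vertex has representation $\{0,1,\dots,1\}$ with $t-1$ ones, so any two distinct vertices collide. You also handle the degenerate case $n=1$ explicitly (the trivial partition resolves it vacuously), which the paper leaves implicit.
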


\begin{proof}
Let $K_n$ be the complete graph with n vertices. Suppose $\Pi=\{S_1,S_2,...,S_t\}$ is a resolving multiset partition of $V(K_n)$.
For any vertex $v_i \in V(K_n)$, the multiset partition representation is 
\[r_{mp}(v_i|\Pi)=\{0,1,1,\dots,1\}.\]
consisting of a single $0$ and $t-1$ ones. This is identical for all vertices, thus a contradiction.  
\end{proof}

\begin{theorem}
Let $W_n$ be a wheel graph. Then
\[
\mathrm{mpd}(W_n) = \infty.
\]    
\end{theorem}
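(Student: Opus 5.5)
The plan is a counting argument that exploits the small diameter of the wheel. Write $W_n$ as a hub $c$ joined to every vertex of a rim cycle $C_n = u_1u_2\cdots u_nu_1$, with $n \ge 3$. Then $\mathrm{diam}(W_n)\le 2$. Suppose, for contradiction, that $\Pi=\{S_1,\dots,S_t\}$ is a resolving multiset partition.

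First I record the shape of every representation. Every vertex lies in exactly one part, so each $r_{mp}(v\mid\Pi)$ contains exactly one $0$. Every other entry lies in $\{1,2\}$, because the diameter is at most $2$. Hence $r_{mp}(v\mid\Pi)$ is completely determined by the number $j(v)$ of parts other than the part containing $v$ that meet the neighbourhood $N(v)$. That count is exactly the number of entries equal to $1$, and the remaining $t-1-j(v)$ entries equal $2$. It therefore suffices to show that $j$ takes too few values to separate $n+1$ vertices.

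Next I bound $j$.
\begin{itemize}
\item For the hub, $N(c)$ is all of $V\setminus\{c\}$, so every other part is at distance $1$ and $j(c)=t-1$.
\item For a rim vertex $u_i$, $N(u_i)=\{c,u_{i-1},u_{i+1}\}$, so at most three parts other than its own meet $N(u_i)$, giving $0\le j(u_i)\le 3$.
\end{itemize}
Consequently all representations lie among the multisets indexed by $j\in\{0,1,2,3\}\cup\{t-1\}$. If $t-1\le 3$ this set has at most $4$ elements. If $t\ge 5$, the hub's value $t-1$ is new, but then there are still at most $5$ possible representations. Either way, a resolving multiset partition requires $n+1\le 5$, i.e.\ $n\le 4$.

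The main obstacle is closing off the remaining small cases $n=3,4$ cleanly, since the bare count does not quite rule them out. For $n=3$, $W_3=K_4$, which is covered by the preceding theorem on $K_n$. For $n=4$ I would sharpen the count. By the earlier theorem $t\ge 4$, and since $|V(W_4)|=5$ this forces $t\in\{4,5\}$.
\begin{itemize}
\item If $t=5$, every part is a singleton. Each rim vertex then has $j=3$, namely its two rim neighbours and the hub, so the four rim vertices all share a representation.
\item If $t=4$, the hub has $j=3$. With $5$ vertices and $4$ parts, exactly one part is a pair. A short case check shows some two vertices share $j$:
  \begin{itemize}
  \item if the pair contains the hub, its rim partner and the rim vertex opposite to it both have $j=2$;
  \item if the pair consists of two rim vertices, some rim vertex outside the pair has $j=3$, the same as the hub.
  \end{itemize}
\end{itemize}
This contradiction in all cases gives $\mathrm{mpd}(W_n)=\infty$.
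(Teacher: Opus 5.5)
Your main argument is correct but takes a different route from the paper's. The paper argues locally about how the rim is split among the parts. A part meeting the rim in at least three vertices contains two vertices with only one neighbour in that part, whose representations agree. Two singleton rim parts give two equal representations. This exhibits explicit colliding pairs, but it leaves some configurations to the reader, such as parts meeting the rim in exactly two vertices. You instead use $\mathrm{diam}(W_n)\le 2$ to reduce $r_{mp}(v\mid\Pi)$ to the single number $j(v)$ and then apply pigeonhole. This is exhaustive and needs no knowledge of where the collisions occur. Pushed slightly further, it proves more. Since $j(v)\le t-1$ for every $v$, any graph of diameter at most $2$ has at most $t$ distinct representations, so a resolving multiset partition would have to consist of singletons. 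Then $j(v)=\deg(v)$, and in a graph on at least two vertices two vertices always share a degree. That settles every $n$ at once with no small cases, and it also covers $K_n$, $f_n$, $Fan_n$ and $W_{t,n}$.

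There is, however, a real error in your case check for $n=4$, $t=4$: both witnesses you name are wrong.
\begin{itemize}
\item If the pair is $\{c,u_1\}$, the vertex $u_3$ opposite $u_1$ has its neighbours $c,u_2,u_4$ in three different other parts. So $j(u_3)=3$, not $2$. The actual coincidences are $u_1$ with $u_2$ and $u_4$ (all with $j=2$), and $u_3$ with $c$.
\item If the pair is two antipodal rim vertices $\{u_1,u_3\}$, then $u_2$ and $u_4$ each have both rim neighbours inside the pair. So $j(u_2)=j(u_4)=2$, and no rim vertex outside the pair matches the hub. The coincidences are $u_2$ with $u_4$, and $u_1,u_3$ with $c$.
\end{itemize}
Your claim holds only when the pair consists of two adjacent rim vertices.

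The cleanest fix is to delete this case check, because your own count already rules out $t=4$. For $t\le 4$ every vertex of $W_4$ has $j(v)\in\{0,1,2,3\}$, and five vertices cannot take distinct values there. So for $n=4$ only $t=5$ survives, which your all-singletons argument handles correctly. This also makes the appeal to the bound $t\ge 4$ unnecessary.
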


\begin{proof}
It is straightforward to see that if there is a part contains more than 2 vertices from the rim, then considering the vertices which only have one neighbor in the part, there are at least two of such kind of vertices and their representation is the same. And it is also clear that if there are two parts each containing a single vertex from the rim, then these two vertices will have the same representation.
\end{proof}

\begin{theorem} \label{wheel}
For the friendship graph $f_n$, the multiset partition dimension is infinite,
\[
\operatorname{mpd}(f_n) = \infty.
\]
\end{theorem}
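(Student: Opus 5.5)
Every vertex of $f_n$ is within distance $2$ of every other vertex, so each coordinate of a representation lies in $\{0,1,2\}$. Each representation therefore has the form $\{0,1^{k},2^{t-1-k}\}$. In other words, $r_{mp}(v\mid\Pi)$ is determined by the single integer $k(v)$, the number of parts other than its own at distance $1$ from $v$. The plan is to show that $k(v)$ can take only very few values, fewer than $|V(f_n)|=2n+1$, so no partition resolves.

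Label the graph with center $c$ and triangles $c a_i b_i$ for $1\le i\le n$. For $n=1$, $f_1=K_3$ and the theorem on $K_n$ already gives $\mathrm{mpd}=\infty$. So assume $n\ge 2$ and suppose, for contradiction, that $\Pi=\{S_1,\dots,S_t\}$ is a resolving multiset partition.

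First consider a non-center vertex $a_i$. It has degree $2$, with neighbours $c$ and $b_i$. The parts at distance $1$ from $a_i$, other than its own part, are among the parts containing $c$ or $b_i$, so $k(a_i)\le 2$. The same bound holds for $b_i$.

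Next I show $k(a_i)\ge 1$. The vertices $a_i,b_i$ satisfy $d(a_i,w)=d(b_i,w)$ for all other $w$, so Lemma~\ref{Lemma2.2partition} puts them in different parts. Hence the part of $b_i$ is a part other than that of $a_i$ at distance $1$ from $a_i$. Thus every non-center vertex has $k\in\{1,2\}$.

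The center $c$ is adjacent to all vertices, so $k(c)=t-1$. Altogether the vertices realise at most three distinct values of $k$, hence at most three distinct representations. But $2n+1\ge 5>3$, a contradiction, so $\mathrm{mpd}(f_n)=\infty$. One can also phrase this via pigeonhole on the $2n\ge 4$ non-center vertices alone, which have at most two representations.

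The only point needing care is the reduction of a representation to the count $k(v)$. This relies on the diameter being $2$ and on every vertex having exactly one $0$, namely its own part; the rest is the degree bound combined with Lemma~\ref{Lemma2.2partition}.
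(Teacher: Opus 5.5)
Your proof is correct, but it takes a different route from the paper.

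\textbf{The paper's route.} The paper argues locally on one triangle $\{u,v_{2i-1},v_{2i}\}$. Lemma~\ref{Lemma2.2partition} puts $v_{2i-1}\in S_p$ and $v_{2i}\in S_q$ with $p\neq q$. The coordinates for $S_p,S_q$ are then $(0,1)$ for one vertex and $(1,0)$ for the other. Every remaining coordinate agrees because the two vertices are twins, so the two multisets coincide outright.

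\textbf{Your route.} You argue globally and by counting. Diameter $2$ reduces each representation to the single integer $k(v)$. The degree bound confines the $2n$ non-center vertices to at most two values of $k$, and pigeonhole finishes.

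\textbf{What each buys.} The paper's twin argument needs no diameter hypothesis and covers $n=1$ without appealing to $K_n$. It really shows that any graph with two adjacent vertices at equal distance from every other vertex has infinite $\mathrm{mpd}$: in the same part the representations agree trivially, and in different parts the $0$ and $1$ merely swap. Your argument uses no twins at all. It therefore reaches diameter-$2$ graphs with many low-degree vertices, where the twin argument is unavailable. For example, rim vertices of a wheel and path vertices of a fan have degree at most $3$, so $k\in\{0,1,2,3\}$. Hence $\mathrm{mpd}(W_n)=\infty$ and $\mathrm{mpd}(Fan_n)=\infty$ follow at once whenever the rim or path has at least five vertices.

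\textbf{Minor remark.} Lemma~\ref{Lemma2.2partition} is dispensable in your proof. Even allowing $k\in\{0,1,2\}$, the $2n\ge 4$ non-center vertices realise at most three values.
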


\begin{proof}
let $f_n$ be the friendship graph with the center $u$ and triangles $\{u, v_{2i-1}, v_{2i}\}$ for $1 \le i \le n$.
Suppose $\Pi=\{S_1,S_2,\ldots, S_t\}$ is a resolving multiset partition of $V(f_n)$.

From Lemma \ref{Lemma2.2partition}, it is clear that
$ v_{2i-1}, v_{2i}$ can not be in the same part.

Let $ v_{2i-1}, v_{2i}$ be in two different partitions $S_p , S_q ,p \neq q$.
$ v_{2i-1} \in s_p$ and $v_{2i} \in s_q$.
\[
d(v_{2i-1},S_p)=0,\qquad d(v_{2i},S_p)=1.
\]
Similarly, 
\[
d(v_{2i},S_q)=0,\qquad d(v_{2i-1},S_q)=1.
\]

For any other partitions $s_j ,j \notin p,q$,
\[
d(v_{2i-1},S_j)=d(v_{2i},S_j).
\]

Therefore, 
\[
r_{mp}(v_{2i-1}\mid\Pi)=r_{mp}(v_{2i}\mid\Pi)
\]
and that contradict $\Pi$ is a resolving multiset partition.

Hence, $mpd(f_n)=\infty$
\end{proof}
\begin{theorem}
For the fan graph $Fan_n$, the multiset partition dimension is infinite, 
\[
\operatorname{mpd}(Fan_n) = \infty.
\]
\end{theorem}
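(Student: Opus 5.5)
The plan is a counting argument based on the fact that $Fan_n$ has diameter at most $2$. Write $Fan_n$ as the path $v_1v_2\cdots v_n$ together with a universal vertex $u$ adjacent to every $v_i$, and suppose $\Pi=\{S_1,\dots,S_t\}$ is a resolving multiset partition. Since every vertex is within distance $2$ of every other vertex, each distance $d(x,S_j)$ lies in $\{0,1,2\}$. A multiset representation therefore consists of exactly one $0$, some number of $1$'s and some number of $2$'s, with $t$ entries in total. Hence $r_{mp}(x\mid\Pi)$ is completely determined by
\[
a(x)=\big|\{\,j : S_j\cap N[x]\neq\emptyset\,\}\big|,
\]
the number of parts meeting the closed neighbourhood of $x$. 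The representation is then $\{0\}$ together with $a(x)-1$ ones and $t-a(x)$ twos. So $\Pi$ is resolving if and only if the $n+1$ vertices have pairwise distinct values of $a$.

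Next I would bound the possible values of $a$. For $u$ we have $N[u]=V$, so $a(u)=t$. For a path vertex $v_i$, the set $N[v_i]$ is contained in $\{v_{i-1},v_i,v_{i+1},u\}$, so $1\le a(v_i)\le 4$. Thus all values of $a$ lie in $\{1,2,3,4\}\cup\{t\}$, a set of at most $5$ values. By the earlier theorem that $\mathrm{mpd}(G)\ge 4$, we have $t\ge 4$. If $t=4$, the value $a(u)=4$ already lies in $\{1,\dots,4\}$, so only $4$ distinct values are available; if $t\ge 5$, at most $5$ are available. Either way, distinctness forces $n+1\le 5$, that is $n\le 4$. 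So for every $n\ge 5$ no resolving multiset partition exists, and $\mathrm{mpd}(Fan_n)=\infty$.

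The remaining work, and the main obstacle, is the small cases $n\le 4$, where the crude count does not immediately give a contradiction.
\begin{itemize}
\item For $n=1,2$, $Fan_n$ is $K_2$ or $K_3$, so the theorem for $K_n$ applies.
\item For $n=3$ and $n=4$, we have $4$ or $5$ vertices. Since $t\ge 4$, the partition has at most one non-singleton part, so only a handful of partitions need to be examined.
\end{itemize}
For these I would refine the count. An endpoint $v_1$ or $v_n$ has $|N[x]|=3$, so $a\le 3$. A vertex with $a=1$ must have its entire closed neighbourhood inside one part, and that part then contains $u$. Combining these restrictions, I expect at most one vertex can take each of the values $1$ and $4$, and then two vertices must be forced to share a value. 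If this refinement does not close both cases cleanly, I would fall back on direct enumeration of the few partitions with $t\ge 4$, as was done for the small cycles.
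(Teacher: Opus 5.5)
Your argument is correct and takes a different route from the paper. The paper writes out no proof for $Fan_n$. It only indicates that the result follows the local reasoning used for wheels and friendship graphs: inspect how the rim (here, path) vertices are spread over the parts, and exhibit a specific pair with equal representations (twins, or two vertices each having exactly one neighbour in some part). You instead use only that $\mathrm{diam}(Fan_n)\le 2$, so each representation is encoded by the single integer $a(x)$, and a pigeonhole count on the range of $a$ finishes the proof. This is more rigorous than the paper's case sketch and transfers directly to $W_n$, $W_{t,n}$ and $f_n$.

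The one loose end is $n\le 4$. There your \emph{I expect} step is vague, and you lean on $\mathrm{mpd}(G)\ge 4$, whose proof in the paper is itself only sketched. Neither is needed. Since $1\le a(x)\le t$ for every $x$, injectivity of $a$ forces $t\ge |V(Fan_n)|$, so $\Pi$ must consist of singletons, and then $a(x)=\deg(x)+1$. For $n\ge 2$ the endpoints $v_1,v_n$ both have degree $2$, and $n=1$ gives $K_2$. This settles every $n$ at once. In fact it shows that every graph of diameter at most $2$ with at least two vertices has infinite multiset partition dimension, because some two of its vertices share a degree.
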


A \textit{$t$-fold wheel graph} $W_{t,n}$ is a graph derived from a wheel by duplicating the hub vertex one or more times, resulting in $t$ hub vertices, each adjacent to all rim vertices, and not adjacent to each other. The \textit{fan graph} $Fan_n$ is the graph obtained from a path $W_n$ by removing an edge on the rim. Along the same line of reasoning as in Theorem \ref{wheel}, it is not hard to see the following.

\begin{theorem}
For the t-fold wheel graph $W_{t,n}$, the multiset partition dimension is infinite, 
\[
\operatorname{mpd}(W_{t,n}) = \infty.
\]
\end{theorem}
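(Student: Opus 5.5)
The plan is to exploit the fact that $W_{t,n}$ has diameter at most $2$, so that a multiset partition representation carries very little information. I would not rely on the rim-counting argument used for the wheel. Instead I would use a cardinality argument that covers every $t \ge 1$ and $n \ge 3$ at once, and so also reproves the wheel case.

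\textbf{Step 1 (distances are in $\{0,1,2\}$).} Any two rim vertices have a common neighbour, namely any hub. Any two hubs have a common rim neighbour. Every hub is adjacent to every rim vertex. Hence $d(x,y)\le 2$ for all vertices $x,y$. Let $\Pi=\{S_1,\dots,S_k\}$ be any partition of $V(W_{t,n})$, and let $v\in S_i$. Then $d(v,S_i)=0$. For $j\ne i$ we have $d(v,S_j)=1$ exactly when $S_j$ contains a neighbour of $v$, and $d(v,S_j)=2$ otherwise. So
\[
r_{mp}(v\mid\Pi)=\{0,\underbrace{1,\dots,1}_{a(v)},\underbrace{2,\dots,2}_{k-1-a(v)}\},
\]
where $a(v)$ is the number of parts other than the part of $v$ that meet $N(v)$. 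The representation of $v$ is therefore completely determined by the integer $a(v)\in\{0,1,\dots,k-1\}$.

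\textbf{Step 2 (forcing singletons).} If $\Pi$ is resolving, the map $v\mapsto a(v)$ must be injective on all $n+t$ vertices. Its values lie in a set of size $k$, so $k\ge n+t$. Since $\Pi$ partitions $n+t$ vertices into nonempty parts, $k\le n+t$. Hence $k=n+t$ and every part is a singleton. In that case $a(v)=\deg(v)$ for every $v$.

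\textbf{Step 3 (repeated degrees).} Every rim vertex has degree $2+t$, and there are $n\ge 3$ rim vertices, so two distinct vertices have the same value of $a$. This contradicts injectivity. Therefore no resolving multiset partition exists, and $\mathrm{mpd}(W_{t,n})=\infty$.

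Alternatively, when $t\ge2$, the hubs are pairwise non-adjacent with identical neighbourhoods. Lemma~\ref{Lemma2.2partition} then applies to them, and they also share the degree $n$; this gives a second contradiction.

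The only delicate point is Step 1, where one must check that the diameter is at most $2$ for all $t\ge1$ and $n\ge3$. Once that holds, the pigeonhole count in Steps 2 and 3 is routine.
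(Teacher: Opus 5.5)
Your proof is correct, and it takes a genuinely different route from the paper. The paper gives no self-contained argument for $W_{t,n}$. It only defers to the reasoning used earlier for the wheel and the friendship graph. That reasoning is local: it exhibits specific pairs of vertices with equal representations, namely adjacent twins in $f_n$, and rim vertices having a single neighbour in their own part in $W_n$. Such pairing does not transfer cleanly to $W_{t,n}$. For $t\ge 2$ the hubs are non-adjacent twins, and if they lie in different parts $S_p,S_q$, their representations coincide only when $d(h_1,S_q)=d(h_2,S_p)$. That equality depends on whether $S_p$ and $S_q$ contain rim vertices. Your global count avoids all of this case analysis. Diameter at most $2$ collapses each representation to the single integer $a(v)$, pigeonhole forces every part to be a singleton, and resolvability then reduces to all degrees being distinct. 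This buys more than the statement. Every graph on at least two vertices has two vertices of equal degree, so your Steps 2--3 show that \emph{every} connected graph of diameter at most $2$ with at least two vertices has $\mathrm{mpd}=\infty$. That single result recovers the paper's theorems for $K_n$, $W_n$, $f_n$, $Fan_n$ and $W_{t,n}$ at once. What the paper's twin-type reasoning offers in exchange is locality: it can in principle be applied in graphs of larger diameter, where your reduction of representations to the count $a(v)$ is not available.
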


\end{document}